\newtheorem{theorem}{Theorem}
\newtheorem{claim}{Claim}
\newtheorem{claim1}{Claim}
\newtheorem{question}[theorem]{Question}
\newcommand{\dhorline}[3][0]{%
    \tikz[baseline]{\path[decoration={markings,
      mark=between positions 0 and 1 step 2*#3
      with {\node[fill, circle, minimum width=#3, inner sep=0pt, anchor=south west] {};}},postaction={decorate}]  (0,#1) -- ++(#2,0);}}
\begin{document}
\newcommand{\Addresses}{{
\bigskip
\footnotesize

\medskip

Maria-Romina~Ivan, \textsc{Department of Pure Mathematics and
Mathematical Statistics, Centre for Mathematical Sciences, Wilberforce
Road, Cambridge, CB3 0WB, UK.}\par\nopagebreak\textit{Email address:}
\texttt{mri25@dpmms.cam.ac.uk}

\medskip

Imre~Leader, \textsc{Department of Pure Mathematics and Mathematical
Statistics, Centre for Mathematical Sciences, Wilberforce Road,
Cambridge, CB3 0WB, UK.}\par\nopagebreak\textit{Email address:}
\texttt{i.leader@dpmms.cam.ac.uk}

\medskip

Luca Q.~Zamboni, \textsc{Universit\'e de Lyon, Universit\'e de Lyon 1,
CNRS UMR 5208, Institut Camille Jordan, 43 boulevard du 11 novembre
1918, F69622 Villeurbanne Cedex, France.}\par\nopagebreak\textit{Email
address:} \texttt{zamboni@math.univ-lyon1.fr}
}}
\pagestyle{fancy}
\fancyhf{}
\fancyhead [LE, RO] {\thepage}
\fancyhead [CE] {MARIA-ROMINA IVAN, IMRE LEADER, LUCA Q.
ZAMBONI}
\fancyhead [CO] {A RAMSEY CHARACTERISATION OF EVENTUALLY PERIODIC WORDS}
\renewcommand{\headrulewidth}{0pt}
\renewcommand{\l}{\rule{6em}{1pt}\ }
\title{\Large\textbf{A RAMSEY CHARACTERISATION OF EVENTUALLY PERIODIC
WORDS}}
\author{MARIA-ROMINA IVAN, IMRE LEADER, LUCA Q. ZAMBONI}
\date{}
\maketitle
\begin{abstract}
A factorisation $x = u_1 u_2 \cdots$ of an infinite word $x$ on alphabet
$X$ is called
`monochromatic', for a given colouring of the finite words $X^*$ on
alphabet $X$, if each $u_i$ is the
same colour.
Wojcik and Zamboni proved that the word $x$ is periodic if and only if
for every finite colouring
of $X^*$ there is a monochromatic factorisation of $x$. On the other
hand, it follows from
Ramsey's theorem that, for {\it any} word $x$, for every finite
colouring of $X^*$ there is a
suffix of $x$ having a monochromatic factorisation.\par A factorisation
$x = u_1 u_2 \cdots$ is called `super-monochromatic' if
each word
$u_{k_1} u_{k_2} \cdots u_{k_n}$, where $k_1 < \cdots < k_n$, is the
same colour. Our aim in this paper
is to show that a word $x$ is eventually periodic if and only if for
every finite
colouring of $X^*$ there is a suffix of $x$ having a super-monochromatic
factorisation.
Our main tool is a Ramsey result about
alternating sums that may be of independent interest.
\end{abstract}
\section{Introduction}Let $X$ be a non-empty finite or infinite set (called the alphabet) and let $X^*$ denote the set of all finite words $x_1x_2\cdots x_n$ with $n\geq 1$ and $x_i\in X$ for all $i$. Let $x=x_1x_2x_3\cdots$ be an infinite word on $X.$ Given a finite colouring of $X^*,$ we say that a factorisation $x=u_1u_2u_3\cdots $ (with $u_i\in X^*$) is \textit{monochromatic} if all the $u_i$ have the same colour. When is it
the case that $x$ always has a monochromatic factorisation, for any
finite colouring of $X^*$?\par This is certainly the case if $x$ is
\textit{periodic}. Indeed, if $x = uuu
\cdots$ then for any
colouring of $X^*$ that very factorisation is trivially monochromatic.
In the
other direction,
Wojcik and Zamboni \cite{Luca} proved that if $x$ is not periodic then
there
exists a finite
colouring of $X^*$ for which $x$ does not have a monochromatic
factorisation. Thus the above
Ramsey condition actually characterises the periodic words.\par We
remark that if we are allowed to pass to a suffix of $x$ then this
characterisation
breaks down completely. Indeed, {\it every} word $x$  has the property
that for every finite
colouring of $X^*$ there is a suffix of $x$ having a monochromatic
factorisation. This
result is due to Sch{\"u}tzenberger \cite{S}, and it follows from
Ramsey's
theorem. To see this, let
$x$ be the word $x_1 x_2 \cdots$. Given a
finite colouring $\phi$ of $X^*$, we define a colouring of
$\mathbb{N}^{(2)}$,
the edge set of the
complete graph on the natural numbers, by giving the pair $(i,j)$, where
$i<j$, the
colour $\phi(x_i x_{i+1} \cdots x_{j-1})$. By Ramsey's theorem, there is
a monochromatic infinite set for this colouring, say $m_1 < m_2 <
\cdots$. But now
we note that the finite words $x_{m_i} x_{m_i + 1} \cdots
x_{m_{i+1}-1}$, for each $i$,
are all assigned the same colour by $\phi$, and they form a
factorisation of the
suffix of $x$ starting at position $m_1$.

\par
Actually, the above argument shows that more: it shows that, for any
colouring, there is a suffix of $x$ having a factorisation $u_1 u_2
\cdots$
in which every word $u_i u_{i+1} \cdots u_j$, for $i \leq j$, has the
same colour. It was shown by de Luca and Zamboni \cite{LZ} that this
strengthened form is actually equivalent to Ramsey's theorem.

\par In light of these results,
it is natural to ask if there is a Ramsey
characterisation
of the \textit{eventually periodic} words over $X$, i.e., infinite words of the form $uvvv\cdots$ with $u,v\in X^*$.
We say that a factorisation $x = u_1 u_2 \cdots$ is {\it
super-monochromatic} if each word
$u_{k_1} u_{k_2} \cdots u_{k_n}$, where $k_1 < \cdots < k_n$, is the
same colour. Our motivation for
considering this notion comes from the following observation: if
$x$ is eventually periodic then for every finite colouring of $X^*$
there
is a suffix of
$x$ having a super-monochromatic factorisation.\par Indeed, given a
finite colouring $\phi$ of $X^*$, it suffices to take a
suffix of $x$ that
is periodic: say $y = uuu \cdots$. We induce a colouring of $\mathbb{N}$
by
giving the number $n$
the colour $\phi(u^n)$. By Hindman's theorem \cite{Fin}, there
exists an
infinite
set $M \subset \mathbb{N}$, say $M = \{a_1,a_2,\cdots \}$, where $a_1 <
a_2 <
\cdots$, such that
every (non-empty) finite sum of distinct elements of
$M$ has the same colour. But now the factorisation $y = u^{a_1} u^{a_2}
\cdots$ is
super-monochromatic.\par Our aim in this paper is to show that this
condition actually
characterises the
eventually periodic words. In other words, we will show that if the word
$x$ has the
property that for every finite
colouring of $X^*$ there is a suffix of $x$ having a super-monochromatic
factorisation, then
$x$ is eventually periodic.
\begin{theorem} Let $x$ be an infinite word on alphabet $X$. Then $x$ is
eventually periodic if
and only if for every finite
colouring of $X^*$ there is a suffix of $x$ having a super-monochromatic
factorisation.
\end{theorem}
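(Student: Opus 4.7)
The forward direction is already established in the excerpt via Hindman's theorem, so the content of the theorem is the converse. I would argue by contrapositive: assuming $x$ is not eventually periodic (equivalently, that no suffix of $x$ is periodic), I aim to construct a single finite colouring $\phi$ of $X^*$ under which no suffix of $x$ admits a super-monochromatic factorisation.

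The first step is to assemble a handful of elementary ``diagnostic'' colourings into one product colouring and record the constraints they impose on any hypothetical super-monochromatic factorisation $y = u_1 u_2 \cdots$ of a suffix $y$. Colouring by length modulo $N$ gives, via the identity $\phi(u_i u_j) = \phi(u_i)$, the congruence $|u_i| \equiv 2|u_i| \pmod{N}$, hence $|u_i| \equiv 0 \pmod{N}$. A ``first $k$ letters'' colouring (extended to short words so as to keep finitely many colours) either forces all $u_i$ to equal a single fixed short word --- making $y$ periodic and contradicting the hypothesis --- or forces every $u_i$ to begin with the length-$k$ prefix of $y$; symmetrically for the last $k$ letters. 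Writing $p_i$ for the position in $y$ at which $u_i$ begins, this yields that $p_{i+1} - p_i = |u_i|$ is divisible by $N$ and that $y_{p_i} y_{p_i+1} \cdots y_{p_i + k - 1}$ equals the length-$k$ prefix of $y$ for every $i$.

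The heart of the argument then invokes the paper's Ramsey result about alternating sums, which I expect to be of the flavour: for every finite colouring of $\mathbb{N}$, there exist $a_1 < a_2 < \cdots$ such that all positive alternating sums $a_{i_1} - a_{i_2} + a_{i_3} - \cdots$ (with $i_1 > i_2 > \cdots$) take the same colour. I would apply this to the partial sums $c_i = |u_1| + \cdots + |u_i|$ under a colouring that reads off a long factor of $y$ at the corresponding shifted position; the super-monochromatic condition forces the positions given by alternating sums to witness the same $y$-factor, and so yields arbitrarily long factors of $y$ recurring at positions whose pairwise differences lie in some fixed $d\mathbb{Z}$. A Fine--Wilf-style overlap argument then propagates this repetition across a tail of $y$, giving an eventually periodic suffix of $x$ and contradicting our standing assumption.

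I expect the main obstacle to be this third step: pinning down the precise alternating-sums Ramsey statement that is strong enough to combine with the super-monochromatic identity, and arranging the companion colouring so that its numerical conclusion (an alternating sum taking a fixed colour) genuinely translates into word-theoretic repetition (two long factors of $y$ being literally equal), rather than into mere agreement of coarse statistics.
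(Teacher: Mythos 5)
The central step of your plan has the alternating-sums Ramsey statement the wrong way round. The paper's Theorem~2 is a \emph{negative} result: there \emph{exists} a finite colouring of $\mathbb{N}^{(2)}$ (pairs, not singletons) under which \emph{no} increasing sequence $(x_k)$ makes all pairs $\bigl(x_{k_1}-x_{k_2}+\cdots+x_{k_t},\, x_{k_{t+1}}\bigr)$ monochromatic. You instead invoke a \emph{positive} statement of the opposite form, namely that for every colouring of $\mathbb{N}$ a monochromatic alternating-sum sequence exists. As the paper itself points out, that affirmative version does hold when all signs are plus (via Milliken--Taylor), but when the signs alternate it is \emph{false}, and that failure is precisely the engine of the whole argument. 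With your inverted intermediate statement there is nothing to contradict: the correct logic is to build the bad colouring $\phi$ of $X^*$ directly out of the bad colouring $\theta$ of $\mathbb{N}^{(2)}$, and then show that a super-monochromatic factorisation of any suffix would produce a sequence $(m_i)$ violating the choice of $\theta$. Note also that you need a colouring of \emph{pairs}: the reduction must track both where a concatenation $u_{k_1}\cdots u_{k_t}$ begins and where it ends, which is why the paper works in $\mathbb{N}^{(2)}$ rather than $\mathbb{N}$.

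Your ``diagnostic'' colourings also run in a different and, as far as I can see, unproductive direction. The length-mod-$N$ and prefix/suffix colourings yield the constraints you state, but they do not engage with non-eventual-periodicity in a way that feeds the alternating-sums machine. The paper instead colours a factor $u$ by $\theta(A_x(u),B_x(u))$, where $A_x(u)$ is the starting position of the \emph{first} occurrence of $u$ in $x$ and $B_x(u)=A_x(u)+|u|$, together with one extra bit recording whether $u$ splits as $vw$ with both $v$ and $w$ first occurring in their ``standard'' positions inside $u$'s first occurrence. It is precisely the failure of $x$ to be eventually periodic that (Claim~1) lets one normalise so that each $u_i$'s first occurrence is at its standard position; from there one extracts the unexpected structural fact that $u_{k_1}\cdots u_{k_m}$ is a suffix of $u_n$ whenever $k_1<\cdots<k_m<n$, which is what makes $A_x(u_{k_1}\cdots u_{k_t})$ an alternating sum in the $m_i=B_x(u_i)$ while $B_x(u_{k_1}\cdots u_{k_t})=m_{k_t}$. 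The contradiction with $\theta$ is then immediate, with no Fine--Wilf overlap argument required. Your final paragraph correctly identifies that the translation from a numerical conclusion to word-theoretic repetition is the crux, but the resolution is that one does not need any such translation back: the implication runs one way, from words into $\mathbb{N}^{(2)}$, against a pre-built bad colouring.
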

(Note that if we `swap the quantifiers' we would have the statement that $x$ is
eventually periodic if and only if there is a suffix of $x$ such that every
finite colouring of this suffix has a super-monochromatic factorisation -- 
which is true by the remarks above.)

This result has actually been around in the community as a conjecture
for
some time (see e.g.~\cite{Wojphd}). There have been some partial
results, of which the strongest is
perhaps the result of Wojcik \cite{Wojphd}, who showed that Theorem 1 holds for words $x$ that have at most finitely many distinct square factors, where by a \textit{square factor} we mean a non-empty block of the form $uu$ which occurs in $x$. But the result was not even known for Sturmian words, which are
regarded as the `simplest' \textit{aperiodic} words, i.e., words that are not eventually periodic.
\par
Let us also remark that if one is allowed to pass to the shift orbit
closure
then the situation is completely different. Recall that the
{\it shift orbit closure} of an infinite word $x$ is the closure of the
set of suffices of $x$ in the product topology: equivalently, it
consists of all infinite words $y$ such that every factor of $y$ is a
factor of $x$. Van Th\'e and Zamboni (see \cite{Woj}) showed that,
for {\it any}
infinite word $x$ over a finite alphabet $X$, whenever $X^*$ is
finitely coloured there is a word $y$ in the shift orbit closure of $x$
having a super-monochromatic factorisation.

\par
Our proof is in two separate parts. In the first part, we reduce the
problem to a
problem that concerns not colourings of words, but colourings of
$\mathbb{N}^{(2)}$. It will
turn out from this reduction that Theorem 1 is implied by the following
result, which
concerns alternating sums and may be of independent interest.
\begin{theorem} There exists a finite colouring of $\mathbb{N}^{(2)}$
such that there
do {\it not}
exist $x_1 < x_2 < \cdots$ for which the set of all pairs
$(x_{k_1} - x_{k_2} + x_{k_3} - \cdots + x_{k_{t}}, x_{k_{t+1}})$, where
$t$ is odd and
$k_1 < k_2 < \cdots < k_{t+1}$, is monochromatic.
\end{theorem}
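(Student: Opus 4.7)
The plan is to construct an explicit finite colouring $\chi$ of $\mathbb{N}^{(2)}$ and rule out, by contradiction, any increasing sequence $x_1 < x_2 < \cdots$ for which every stated pair is the same colour.

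First I would record the structural identity: for odd $t$,
\[
x_{k_1}-x_{k_2}+\cdots+x_{k_t} \;=\; x_{k_t} \;-\; \sum_{r=1}^{(t-1)/2}(x_{k_{2r}}-x_{k_{2r-1}}).
\]
Writing $d_r=x_{r+1}-x_r$ and observing that disjoint unions of index intervals $[k_{2r-1},k_{2r}-1]$ exhaust all finite subsets of $\{1,\dots,k_t-2\}$, it follows that the first coordinate of any pair is $x_{k_t}-g$, where $g$ is either $0$ or a non-empty finite subset sum of $\{d_1,\dots,d_{k_t-2}\}$; the maximum such $g$, realised with $t=3$ (by taking $k_1=1$, $k_2=k_t-1$), equals $x_{k_t-1}-x_1$. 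So the monochromaticity hypothesis reads $\chi(x_k-g,x_l)=\chi(x_k,x_l)$ for all $k<l$ and all admissible $g$. A colouring factoring through $b-a$ alone cannot suffice, since Hindman's theorem would yield an infinite monochromatic IP-set in the induced colouring of the gaps; hence $\chi$ must depend non-trivially on the first coordinate.

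The candidate colouring I would try has two components: a coarse part $\chi_1(a,b)=(\lfloor\log_2 b\rfloor-\lfloor\log_2 a\rfloor)\bmod N$ for small $N$, and a fine part $\chi_2(a)$ encoding the position of $a$ within its dyadic interval $[2^{\lfloor\log_2 a\rfloor},2^{\lfloor\log_2 a\rfloor+1})$ at a suitable resolution. Under the monochromaticity hypothesis, $\chi_1$ forces all $\lfloor\log_2 x_n\rfloor$ into a single residue class modulo $N$ and forces every alternating sum to have bit-length $\lfloor\log_2 x_{k_t}\rfloor$, giving the upper bound $g\leq x_{k_t}-2^{\lfloor\log_2 x_{k_t}\rfloor}$; combining with $g_{\max}=x_{k_t-1}-x_1$ yields the lower bound $x_{k_t}-2^{\lfloor\log_2 x_{k_t}\rfloor}\geq x_{k_t-1}-x_1$ on the \emph{excess} of $x_{k_t}$ above its dyadic threshold. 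The fine part $\chi_2$ is then chosen to impose an upper bound on this same excess that, combined with the lower bound above, becomes incompatible for some large $k_t$.

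The main obstacle is calibrating $\chi_2$ so the collision is genuine for every infinite sequence. Sparse quasi-geometric sequences---$\lfloor\log_2 x_n\rfloor$ jumping by multiples of $N$, with $x_n$ placed in an appropriate fraction of its dyadic interval---comfortably satisfy any fixed single-level $\chi_2$, so $\chi_2$ must be genuinely multi-scale, recording binary information about $a$ at several nested depths. The heart of the argument, and the hard part, will be showing that no sequence can thread every depth simultaneously---so that at some odd $t$ the excess is forced both to exceed $x_{k_t-1}$ (from $\chi_1$) and to be bounded by a vanishing fraction of $2^{\lfloor\log_2 x_{k_t}\rfloor}$ (from $\chi_2$)---which will likely require an inductive construction of $\chi_2$ specifically designed to kill the relevant sparse sequences.
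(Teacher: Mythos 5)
Your structural reformulation is sound and closely parallels the paper's own first move: the paper sets $y_1=x_1$, $y_n=x_n-x_{n-1}$ and rewrites the alternating sums as partial sums of the $y_n$ over unions of index intervals (your $d_r$ is exactly $y_{r+1}$), and your observation that $g$ ranges over all subset sums of $\{d_1,\dots,d_{k_t-2}\}$ plus zero is correct. Your remark that no colouring factoring through $b-a$ alone can work is also a legitimate sanity check, for roughly the reason you give. But the proposal stops exactly where the theorem actually lives. You never define $\chi_2$; you say yourself that any fixed single-level version is defeated by sparse quasi-geometric sequences, and that the argument ``will likely require an inductive construction of $\chi_2$ specifically designed to kill the relevant sparse sequences.'' That construction, and the proof that a supposed monochromatic sequence cannot evade it, \emph{is} the theorem. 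As written there is nothing to check, because the colouring you would need to analyse has not been written down.

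There is also a conceptual reason to doubt that a dyadic-scale invariant like ``position of $a$ within $[2^{\lfloor\log_2 a\rfloor},2^{\lfloor\log_2 a\rfloor+1})$'' is the right handle. The crux of this problem, which the paper flags explicitly, is that one cannot reduce to the case where the $y_n$ (equivalently your $d_r$) have disjoint binary supports and well-separated scales; the usual first step in ``bad colouring'' arguments is unavailable, and carry propagation when adding the $y_n$ must be confronted head on. Your sketch treats the sequence as if each $x_n$ sat cleanly in its own dyadic window and the only obstruction were calibrating a single ``excess'' parameter; it gives no mechanism for controlling how carries interact across the overlapping supports of the increments, which is precisely what makes the problem hard. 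By contrast, the paper's colouring is built from seven binary-structural components (positions mod $3$ of first and last digits, the leading bits, a ``2-to-1 jump'' parity, an interval-count parity, and a common-fragment parity), and most of the proof consists of a long sequence of block-subsequence reductions that first force a ``staircase'' shape on the $y_n$, then force overlapping supports, then localise the carries, and finally derive a parity contradiction from the fragment counts. None of that machinery is visible in, or replaced by, your plan. So the gap is genuine: you have the right reformulation and a plausible-sounding opening move, but the construction of the colouring and the analysis that it excludes every sequence are entirely missing, and the specific route you propose sidesteps the carry issue that the correct argument must solve.
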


The second part of the proof thus consists of a proof of Theorem 2. What
is interesting
is the role played by the alternation. Indeed, if all the signs were
plus-signs then
the Ramsey statement would be in the affirmative. In other words, for
any
finite colouring of $\mathbb{N}^{(2)}$ there exist $x_1 < x_2 < \cdots$
for
which the set of
all pairs $(x_{k_1}+x_{k_2}+x_{k_3}+\cdots+ x_{k_{t}},
x_{k_{t+1}})$,
where $k_1 < k_2 < \cdots < k_{t+1}$, is monochromatic. This follows for
example from the
Milliken-Taylor theorem (\cite{Mik}, \cite{Taylor}). To see this, recall
that the Milliken-Taylor theorem asserts that whenever the set of all
pairs $(A,B)$, where $A$ and $B$ are (non-empty) finite subsets of
$\mathbb{N}$ with $\max A < \min B$, is finitely coloured there
exists a sequence $A_1,A_2, \cdots$ of finite subsets of
$\mathbb{N}$, with $\max A_n < \min A_{n+1}$ for all $n$, such that all
of the pairs $(S,T)$, where $S$ and $T$ are finite unions of the $A_n$
with
$\max S < \min T$, are the same colour. So we just need to `transfer'
the
colouring from numbers to finite sets: given a finite colouring $\theta$
of
$\mathbb{N}^{(2)}$, we colour each pair $(A,B)$ as above with the colour
$\theta(\sum_{i \in A} 2^i, \sum_{i \in B} 2^i)$. Given the sequence
$A_1,A_2, \cdots$ as guaranteed by the Milliken-Taylor theorem, we set
$x_n = \sum_{i \in A_n} 2^i$, and now we get that every pair
$(x_{k_1}+x_{k_2}+x_{k_3}+\cdots+ x_{k_{t}},
x_{k_{t+1}})$, and in fact even every pair
$(x_{k_1}+x_{k_2}+x_{k_3}+\cdots+ x_{k_{t}},
x_{k_{t+1}}+x_{k_{t+2}} + \cdots +x_{k_s})$, has the same colour.
The interested
reader is referred
to \cite{HS} for
a general discussion of the Milliken-Taylor theorem and many related
results, although we stress that this paper is self-contained.

\par
The colouring argument needed to establish Theorem 2 is rather
complicated, and it is perhaps worthwhile to
describe why this is the case. As we will see, it turns out to be
useful to `change variables' to some other variables, the $y_n$, that
satisfy a related condition. However, this related condition is not
preserved by
passing to subsequences. This is in contrast to the usual situations
when
one is finding a `bad' colouring (see for example \cite{Deuber},
\cite{Imre}, \cite{LRW}), where the first step is always to pass to a
subsequence
or sequence of sums in which the supports of the elements, when written
say in
binary, are disjoint, and even more are ordered in the sense that one
variable's support ends before the next one's begins. Since this step is
not available to us here, we have to deal with the situation when the
$y_i$ do not have disjoint supports, and therefore we need to consider
how the carry-digits behave when we add them to each other. This means
that
the colouring, and especially the proof that it works, is far more
difficult than for other problems that superficially look similar.

\par The plan of the paper is as follows. In Section 2 we show
that Theorem 1 is implied
by Theorem 2, and then in Section 3 we prove Theorem 2. Section 4 is
devoted to some
related problems that we have been unable to solve.
\section{The link between the two theorems}
\par In this section we will show that Theorem 2 implies Theorem 1. As
explained above, we know that given any finite colouring of $X^*$, any
eventually periodic word has a suffix that admits a super-monochromatic
factorisation. Therefore, we only need to prove the reverse implication
of Theorem 1: given an aperiodic word $x$ (i.e.~$x$ is not eventually periodic), we must
construct a finite colouring of $X^*$ for which no suffix of $x$ has a
super-monochromatic factorisation. This will be accomplished using the
colouring given by Theorem 2.
\begin{theorem}
Theorem 2 implies Theorem 1.
\end{theorem}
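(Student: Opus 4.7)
The plan is to convert the bad colouring $c \colon \mathbb{N}^{(2)} \to C$ supplied by Theorem~2 into a finite colouring $\phi$ of $X^*$, tailored to the given non-eventually-periodic word $x$, so that any super-monochromatic factorisation of a suffix of $x$ under $\phi$ would force $c$ to take a constant value on the family of alternating-sum pairs featured in Theorem~2, for a suitable increasing sequence of natural numbers. This would contradict Theorem~2 and complete the reduction.

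The candidate sequence comes from the factorisation itself. If $y = x_N x_{N+1} \cdots$ is a suffix of $x$ and $y = u_1 u_2 \cdots$ is a factorisation, the cut-positions $p_0 = N - 1 < p_1 < p_2 < \cdots$ defined by $p_i = p_{i-1} + |u_i|$ form a strictly increasing sequence in $\mathbb{N}$; this is the sequence (perhaps after passing to a subsequence) to which Theorem~2 will be applied. To each word $w \in X^*$ I would attach a canonical pair $(\alpha(w), \beta(w)) \in \mathbb{N}^{(2)}$ computable from $w$ and $x$ alone, and set $\phi(w) = c(\alpha(w), \beta(w))$, augmented by a bounded amount of bookkeeping data such as a flag for whether $w$ is a factor of $x$, or the residue of $|w|$ modulo a small fixed integer. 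The defining requirement is that whenever $w$ happens to arise as a concatenation $u_{k_1} u_{k_2} \cdots u_{k_t}$ of blocks from a factorisation of a suffix of $x$, the pair $(\alpha(w), \beta(w))$ encodes the alternating-sum information $(p_{k_1} - p_{k_2} + \cdots + p_{k_t},\, p_{k_{t+1}})$ of Theorem~2, or a structurally equivalent variant extracted from the $p_i$. Under this set-up, super-monochromaticity of the factorisation would pin $c$ down to a single value on the entire family of alternating-sum pairs, yielding the required contradiction.

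The main obstacle, and the place where the hypothesis that $x$ is not eventually periodic is indispensable, is making this assignment canonical on all of $X^*$. A generic concatenation $u_{k_1} \cdots u_{k_t}$ of non-contiguous blocks is not a factor of $x$, and even a factor usually occurs at many positions, so one needs to extract a single pair $(\alpha(w), \beta(w))$ that is consistent with every ``block decomposition'' of $w$ which could arise from a factorisation of a suffix of $x$. The non-eventual-periodicity of $x$ is precisely the property that rules out the accidental coincidences which would otherwise force two genuinely different such decompositions of the same $w$ to demand conflicting pairs. Pinning down a workable definition of $(\alpha, \beta)$, and then verifying that super-monochromaticity of a factorisation does produce a $c$-monochromatic family of alternating-sum pairs in $(p_i)$, is the heart of the argument. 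By design, any successful construction must genuinely use the non-periodic hypothesis on $x$, since eventually periodic words do admit super-monochromatic factorisations for every colouring of $X^*$.
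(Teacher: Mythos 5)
Your high-level strategy is exactly the one the paper follows: colour a word $w$ by applying the bad colouring $\theta$ of $\mathbb{N}^{(2)}$ to a pair of positions extracted from $w$ and $x$, add a bounded amount of bookkeeping, and show that a super-monochromatic factorisation would force $\theta$ to be constant on an alternating-sum family. However, what you have written is a description of the goal rather than a proof; every genuinely difficult step is left open, and some of your guesses about where the difficulty lies are off target.

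First, you never specify $(\alpha(w), \beta(w))$. The paper's choice is not mysterious: $\alpha(w) = A_x(w)$ and $\beta(w) = B_x(w)$, the start and end positions of the \emph{first} occurrence of $w$ in $x$. This is automatically well defined for every factor $w$, so the issue of ``conflicting decompositions of the same $w$'' that you flag as the main obstacle does not actually arise -- the pair attached to $w$ does not depend on any decomposition of $w$ at all. The real difficulty is elsewhere: you must show that \emph{for a super-monochromatic factorisation} $y = u_1 u_2 \cdots$, the pair $(A_x(u_{k_1}\cdots u_{k_t}), B_x(u_{k_1}\cdots u_{k_t}))$ can be rewritten as an alternating sum of a fixed increasing sequence. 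This requires establishing strong structural facts about the factorisation: that (after passing to a block subfactorisation) each $u_i$ first occurs at its standard position, and that $u_{k_1}\cdots u_{k_m}$ is a \emph{suffix} of $u_n$ whenever $k_1 < \cdots < k_m < n$. These facts make $B_x(u_{k_1}\cdots u_{k_t}) = B_x(u_{k_t})$, from which the alternating-sum identity follows by subtracting lengths. None of this appears in your proposal.

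Second, the bookkeeping you suggest (a factor/non-factor flag, or $|w| \bmod k$) is not the right bookkeeping. Besides the factor/non-factor flag, the paper attaches a bit recording whether $w$ admits a factorisation $w = vw'$ with $A_x(w) = A_x(v)$ and $B_x(w) = B_x(w')$. This specific bit is what forces the ``suffix'' structure above, and it is not something one would stumble onto without already knowing why it is needed. Your residue-of-length idea plays no role.

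Third, your account of where non-eventual-periodicity enters is imprecise. It is used to show that one may pass to a block subfactorisation in which each $u_i$ first occurs at its standard position: if infinitely many concatenations $u_1\cdots u_k$ all started at the same earlier position, $x$ would have two identical suffices and hence be eventually periodic. It is not used to break ties between competing values of $\alpha, \beta$, since those are already canonical by the first-occurrence convention.

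In short, the skeleton is right but the proof is absent: the specific choice $(\alpha,\beta) = (A_x, B_x)$, the specific bookkeeping bit, and the chain of claims culminating in the suffix property are all essential and all missing.
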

\begin{proof}
By Theorem 2, there exists a finite colouring of $\mathbb N^{(2)}$,
$\theta$, for which there is no increasing sequence
$(x_k)_{k\geq1}$ such that all edges of the form
$(x_{k_1}-x_{k_2}+\cdots +x_{k_t}, x_{k_{t+1}})$ have the same colour,
where $k_1<k_2<\cdots <k_{t+1}$ and $t$ is odd. Let $\mathcal C$ be the
set of colours of $\theta$.\\
\\
Let $x$ be an aperiodic word. We will use $\theta$
to construct a finite colouring $\phi$ of $X^*$ for which no suffix of
$x$ has a super-monochromatic factorization. We denote by $x_i$ the
$i^{\text{th}}$ letter of $x$.\\
\\
For any factor $u$ of $x$, define $A_x(u)=\text{min}\{n\in\mathbb N:
u=x_nx_{n+1}\cdots  x_{n+|u|-1}\}$ and $B_x(u)=A_x(u)+|u|$, where $|u|$
is the length of $|u|$. In other words, $A_x(u)$ is the start position
of the first occurrence of $u$ in $x$, while $B_x(u)$ is the first
position after this first occurrence of $u$.\\For an arbitrary
factorisation $(u_i)_{i\geq1}$ of $x$, we say $(w_i)_{i\geq1}$ is a
\textit{block subfactorisation} of $(u_i)_{i\geq 1}$ if there exists a
strictly increasing sequence of positive integers $(k_j)_{j\geq 1}$ such that
$w_1=u_1u_2\cdots u_{k_1}$ and $w_i=u_{k_{i-1}+1}\cdots u_{k_i}$ for
each $i\geq 2$. Here by $(u_i)_{i\geq1}$ being a factorisation of $x$ we
mean $x=u_1u_2u_3\cdots$. We immediately note that a block
subfactorisation of a super-monochromatic factorisation is still
super-monochromatic.
\\
\\
Now we are ready to define a colouring $\phi:X^*\rightarrow (\mathcal
C\times\{0,1\}) \cup\{2\}$ as follows:
\begin{enumerate}
\item If $u$ is not a factor of $x$, then $\phi(u)=2$.
\item If $u$ is a factor of $x$ and there exists a factorisation $u=vw$
such that $A_x(u)=A_x(v)$ and $B_x(u)=B_x(w)$ (in other words, the first
occurrence of $v$ in $x$ is as the start of the first occurrence of $u$ in
$x$ and also the first occurrence of $w$ in $x$ is as the end of the
first occurrence of $u$ in $x$), then $\phi(u)=(\theta(A_x(u),
B_x(u)),0)$.
\item Otherwise $\phi(u)=(\theta(A_x(u), B_x(u)), 1)$.
\end{enumerate}
We claim that for this colouring $\phi$ no suffix of $x$ has a
super-monochromatic factorisation.\\
Suppose to the contrary that there is a suffix $y$ of $x$ having a super-monochromatic factorisation $y=u_1u_2\cdots$. Let $u_0$ be the (possibly empty) prefix of $x$ so that $x=u_0y$. It is important to remember that each factor $u_i$ may occur in several
places in $y$, not necessarily only in the place immediately following
$u_1u_2\cdots u_{i-1}$. We call this place the \textit{standard}
position of $u_i$. Let the colour of all concatenations of the $u_i$ be
$c\in ( \mathcal C\times\{0,1\}) \cup\{2\}$. Since $\phi(u_1)=c$, and since
$u_1$ is a factor of $x$, we have $c\neq 2$. Thus, $c=(a,b)$ where
$a\in\mathcal C$ and $b\in\{0,1\}$.
\begin{claim} By passing to a block subfactorisation, we may assume that
for every $i\in\mathbb N$, the first occurrence of $u_i$ in $x$ is
exactly the standard position of $u_i$.\end{claim} Equivalently, this
means $A_x(u_i)=|u_0|+|u_1|+\cdots +|u_{i-1}|+1$ for all $i\geq 1$.
\begin{proof} We start by showing that we may assume $A_x(u_1)=|u_0|+1$.
If initially $A_x(u_1)<|u_0|+1$, we consider all concatenations
$u_1u_2\cdots u_k$. If $A_x(u_1u_2\cdots u_k)=|u_0|+1$ for some $k$, we
set our first factor to be $u_1u_2\cdots u_k$ and renumber the rest of
them. Since concatenating consecutive factors does not change the
super-monochromatic property, the new factorisation is still
super-monochromatic and the first factor now has the desired
property.\\If on the other hand $A_x(u_1u_2\cdots u_k)<|u_0|+1$ for all $k\geq1$, then each concatenation $u_1u_2\cdots u_k$ first occurs in $x$ starting at some position in $u_0$. Since there are
infinitely many of them and only finitely many positions in $u_0$, there
exists a position $i$, with $i\leq|u_0|$, at which infinitely many
$u_1u_2\cdots u_k$ start. This immediately implies that the suffix of
$x$ starting at position $i$ is exactly $y$. But this means that $x$ has
two suffices equal to $y$, which implies that $x$ is eventually
periodic. More precisely, we have $x_1x_2\cdots x_{i-1}y=x_1x_2\cdots
x_{|u_0|}y$. Therefore $y_k=x_{i-1+k}$ and $y_k=x_{|u_0|+k}$ for any
$k$. It follows that $x_{i-1+k}=x_{|u_0|+k}$ for any $k$, thus $x$ is
eventually periodic with period $|u_0|-i+1$, contradicting our initial
assumption.\\Therefore we may assume $u_1$ has the desired property. We
now move on to $u_2$ and repeat the same argument, looking at
concatenations of the form $u_2u_3\cdots u_k$: so $u_2$ may be assumed
to have the same property too. It follows inductively that we may assume
that all $u_i$ have the property stated in the claim.
\end{proof}
We further observe that once we have the property that the first
occurrence of each $u_i$ in $x$ is in the standard position, then any
block subfactorisation has this property as well. For example, $u_1u_2$
cannot appear earlier or else $u_1$ would. Therefore, we can further
assume that $|u_{n+1}|\geq|u_1u_2\cdots u_n|$ for all $n\geq1$.
\par We now look at $u_1u_2$. Because of the above claim we certainly
have $A_x(u_1u_2)=A_x(u_1)$ and $B_x(u_1u_2)=B_x(u_2)$. This means that
$u_1u_2$ is a factor of $x$ that satisfies the factorisation condition
specified in the colouring rule. Thus
$\phi(u_1u_2)=(a,b)=(\theta(A_x(u_1u_2),B_x(u_1u_2)),0)$, and so the
colour of the factorisation is $(a,0)$ with $a\in\mathcal C$.
\begin{claim} The word $u_1u_2\cdots u_n$ is a suffix of $u_{n+1}$, for
every $n\geq 1$.
\end{claim}
\begin{proof}
Consider the concatenation $u_1u_2\cdots u_nu_{n+2}$. Because our
factorisation $u_1u_2\cdots$ is super-monochromatic we have that
$\phi(u_1u_2\cdots u_nu_{n+2})=(a,0)$. This means that not only is
$u_1u_2\cdots u_n u_{n+2}$ a factor of $x$, but also that $u_1u_2\cdots
u_nu_{n+2}=vw$ for some $v$, $w$ with $A_x(u_1u_2\cdots
u_nu_{n+2})=A_x(v)$ and $B_x(u_1u_2\cdots u_nu_{n+2})=B_x(w)$.\\We now
have two possibilities: either $v$ is a prefix of $u_1u_2\cdots u_n$ or
$u_1u_2\cdots u_n$ is a prefix of $v$.\\If $v$ is a prefix of
$u_1u_2\cdots u_n$ then $u_{n+2}$ is a suffix of $w$. Therefore we
immediately have that $A_x(v)\leq A_x(u_1u_2\cdots u_n)$ and $B_x(w)\geq
B_x(u_{n+2})$. It follows that \begin{center}
$B_x(u_1u_2\cdots u_nu_{n+2})=B_x(w)\geq B_x(u_{n+2})=B_x(u_1u_2\cdots
u_nu_{n+1}u_{n+2})$\end{center}and\begin{center}$A_x(u_1u_2\cdots
u_nu_{n+2})=A_x(v)\leq A_x(u_1u_2\cdots u_n)=A_x(u_1u_2\cdots
u_nu_{n+1}u_{n+2})$,
\end{center}
where the last equalities in each line follow from the property that,
for each $i$, the first occurrence of $u_i$ in $x$ is at its standard
position. Consequently, any consecutive concatenation of such factors
has the same property.\\
Putting these two inequalities together, we obtain\begin{center}
$B_x(u_1u_2\cdots u_nu_{n+2})-A_x(u_1u_2\cdots u_nu_{n+2})\geq
B_x(u_1u_2\cdots u_nu_{n+1}u_{n+2})-A_x(u_1u_2\cdots
u_nu_{n+1}u_{n+2})$.\end{center} This is equivalent to $|u_1u_2\cdots
u_nu_{n+2
}|\geq|u_1u_2\cdots u_nu_{n+1}u_{n+2}|$, which is a contradiction.
\\Hence $u_1u_2\cdots u_n$ is a prefix of $v$, and so $w$ is a suffix of
$u_{n+2}$. This implies that $B_x(w)\leq B_x(u_{n+2})$. Since $u_{n+2}$
is a suffix of $u_1u_2\cdots u_nu_{n+2}$, the same argument gives
$$B_x(u_{n+2})\leq B_x(u_1u_2\cdots u_nu_{n+2})=B_x(w).$$ Therefore
$B_x(u_{n+2})=B_x(u_1u_2\cdots u_nu_{n+2})$. By Claim 1 we also have
$B_x(u_{n+1}u_{n+2})=B_x(u_{n+2})$. We conclude that
$B_x(u_{n+1}u_{n+2})=B_x(u_1u_2\cdots u_nu_{n+2})$ which, combined with
$|u_{n+1}|\geq|u_1u_2\cdots u_n|$, gives that $u_1u_2\cdots u_n$ is a
suffix of $u_{n+1}$.
\end{proof}
\begin{claim} The word $u_{k_1}u_{k_2}\cdots u_{k_m}$ is a suffix of
$u_n$, for every $k_1<k_2<\cdots <k_m<n$.
\end{claim}
\begin{proof} We prove the statement by induction on the number of
factors.\\
From Claim 2 we get that $u_t$ is a suffix of $u_{t+1}$ for every
$t\geq1$. Since `is a suffix of' is a transitive property, we obtain
that $u_t$ is a suffix of $u_n$ for every $t<n$, thus the base case is
proved. \\Assume now that the result is true for all concatenations of
at most $s$ factors, and consider a concatenation $u_{k_1}u_{k_2}\cdots
u_{k_s}u_{k_{s+1}}$ with all $k_i<n$. If the indices are consecutive
numbers, Claim 2 guarantees that this is a suffix of $u_{k_{s+1}+1}$,
which is a suffix of $u_n$. If that is not the case, then
$k_i+1<k_{i+1}$ for some $i\leq s$. We take $i$ to be the biggest such
index and apply the induction hypothesis to obtain that $u_1u_2\cdots
u_{k_i}$ is a suffix of $u_{k_{i}+1}$, which is a suffix of
$u_{k_{i+1}-1}$. It then follows that $u_{k_1}u_{k_2}\cdots
u_{k_s}u_{k_{s+1}}$ is a suffix of the consecutive concatenation of
factors $u_{k_{i+1}-1}u_{k_{i+1}}\cdots u_{k_{s+1}}$, which is a suffix
of $u_{k_{s+1}+1}$, thus a suffix of $u_n$. This finishes the inductive
step and hence proves the claim.
\end{proof}

Combining Claim 1 and Claim 3, we obtain that $B_x(u_{k_1}u_{k_2}\cdots
u_{k_t})=B_x(u_{k_t})$. This is because repeatedly applying Claim 3 tells us that
$u_{k_1}u_{k_2}\cdots u_{k_t}$ is a suffix of a consecutive
concatenation of factors ending in $u_{k_t}$. Note that, by
construction, we also have $A_x(u_{n+1})=B_x(u_n)$.
\par We now return to our original colouring. By assumption, we have
that \begin{center}$\theta(A_x(u_{k_1}u_{k_2}\cdots u_{k_t}),
B_x(u_{k_1}u_{k_2}\cdots u_{k_t}))=a$\end{center} for every
$k_1<k_2<\cdots <k_t$. We also know that
\begin{align*}A_x(u_{k_1}u_{k_2}\cdots u_{k_t})&
=B_x(u_{k_1}u_{k_2}\cdots u_{k_t})-|u_{k_1}u_{k_2}\cdots u_{k_t}|\\
&=B_x(u_{k_t})-|u_{k_1}|-|u_{k_2}|-\cdots -|u_{k_t}|\\
&=A_x(u_{k_t})+A_x(u_{k_{t-1}})+\cdots
+A_x(u_{k_1})-B_x(u_{k_{t-1}})-\cdots -B_x(u_{k_1}),\end{align*}
where we used the fact that $|u_{k_i}|=B_x(u_{k_i})-A_x(u_{k_i})$.\\Let
$m_i=B_x(u_i)$ for each $i$. Clearly $(m_i)_{i\geq1}$ is a strictly
increasing sequence. We then have\begin{center}$A_x(u_{k_1}u_{k_2}\cdots
u_{k_t})=m_{k_t-1}+m_{k_{t-1}-1}+\cdots +m_{k_1-1}-m_{k_{t-1}}-\cdots
-m_{k_1}.$\end{center}
It follows that for any choice of $k_1<k_2<\cdots <k_t$, we have
that\begin{center}$\theta(m_{{k_1}-1}-m_{k_1}+m_{k_2-1}-m_{k_2}+\cdots
+m_{k_{t-1}-1}-m_{k_{t-1}}+m_{k_t-1}, m_{k_t})=a$.\end{center}
By choosing the $k_i$ appropriately, it follows that that for any $l$
odd and $i_1<i_2<\cdots <i_l<i_{l+1}$, we have
$\theta(m_{i_1}-m_{i_2}+\cdots -m_{i_{l-1}}+m_{i_l}, m_{i_{l+1}})=a$,
which contradicts the choice of $\theta$.
\end{proof}
\section{Constructing the colouring $\theta$}
In this section we will construct a finite colouring of $\mathbb N
^{(2)}$ with the property that for no infinite strictly increasing
sequence $(x_n)_{n\geq 1}$ do all pairs of the form
$(x_{k_1}-x_{k_2}+\cdots-x_{k_{t-1}}+x_{k_t}, x_{k_{t+1}})$ have the
same colour, where $k_1<k_2<\cdots<k_{t+1}$ and $t$ is odd.
\par We start with a simple observation. Let $y_1=x_1$ and
$y_n=x_n-x_{n-1}$ for each $n\geq 2$. So $x_n=y_n+y_{n-1}+\cdots+y_1$.
\par Now let $t$ be odd and $k_1<k_2<\cdots<k_t$. We then have that $x_{k_1}-x_{k_2}+\cdots-x_{k_{t-1}}+x_{k_t}=x_{k_1}+(x_{k_3}-x_{k_2})+\cdots+(x_{k_t}-x_{k_{t-1}})$. Thus $x_{k_1}-x_{k_2}+\cdots-x_{k_{t-1}}+x_{k_t}=y_1+y_2+\cdots+y_{k_1}+(y_{k_2+1}+\cdots+y_{k_3})+\cdots+(y_{k_{t-1}+1}+\cdots+y_{k_t}).$
\par Let $1<m_1<\cdots<m_s$ be integers and set in the above expression $k_1=1$, $k_2+1=k_3=m_1,$ $\cdots$, $k_{2s}+1=k_{2s+1}=m_s$. Then we obtain that $x_{k_1}-x_{k_2}+\cdots-x_{k_{t-1}}+x_{k_t}=y_1+y_{m_1}+\cdots+y_{m_s}$. This shows that Theorem 2 is equivalent to:
\begin{theorem}
There exists a finite colouring of $\mathbb N^{(2)}$ such that there
does not exist a sequence of natural numbers $(y_k)_{k\geq 1}$ for
which all pairs of the form $(y_{1}+y_{k_1}+y_{k_2}+\cdots  +y_{k_t},
y_1+y_2+y_3+\cdots+y_{k_{t+1}})$ have the same colour, for all choices
of
$1<k_1<k_2<\cdots <k_{t+1}$.
\end{theorem}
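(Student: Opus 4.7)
The plan is to construct the colouring $\theta$ explicitly using binary representations, and then show that any sequence $(y_k)$ satisfying the monochromaticity hypothesis leads to a contradiction. The high-level idea is that the pair $(m,n) = (y_1 + y_{k_1} + \cdots + y_{k_t},\, y_1 + y_2 + \cdots + y_{k_{t+1}})$ records a lot of information about the binary expansions of subset sums of the $y_i$, and by comparing different choices of $k_1, \ldots, k_{t+1}$ we can force the colouring to distinguish cases that should be monochromatic.

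To set up, we partition the positions of binary digits into blocks $B_1 < B_2 < \cdots$ that are very widely spaced, each block being much larger than the total size of all earlier blocks combined. For a pair $(m,n)$ with $m<n$, the colour will encode roughly: (i) the index of the block containing the leading bit of $n$; (ii) some digits of $m$ in a sub-window determined by (i); and (iii) parity or residue data for $m$ and $n$ designed to persist even after small carry propagations. The wide spacing should ensure that, for fixed $n$, small perturbations of $m$ do not change the colour, while genuinely large changes do.

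Next, assuming for contradiction that we have a sequence $(y_n)$ for which every required pair is monochromatic, set $S_N = y_1 + \cdots + y_N$ and $P(K) = y_1 + \sum_{i\in K} y_i$ for any finite $K\subset\{2,3,\ldots\}$, so that $\theta(P(K), S_N)$ is constant over all valid $(K,N)$. We then vary $K$ and $N$ to extract strong constraints on the binary expansions of $P(K)$ and $S_N$: fixing $N$ and varying $K$ restricts the leading bits of $P(K)$, and fixing $K$ and varying $N$ restricts $S_N$. The goal is to combine these constraints, together with comparisons of pairs $(P(K), S_N)$ and $(P(K'), S_{N'})$ whose coordinates differ in controlled ways, to force either infinitely many $y_i$ to vanish or inconsistent carry patterns in the partial sums, in either case contradicting the existence of the sequence.

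The main obstacle is the one flagged in the introduction: since the $y_i$ cannot be assumed to have disjoint or ordered binary supports, their sums produce unpredictable carry cascades, and in particular the second-coordinate partial sum $S_{k_{t+1}}$ includes every $y_i$ up to $k_{t+1}$, not just a chosen subset. The colouring must therefore read digits in positions far enough from the ``active'' bits of the summands that generic carry chains do not disturb them, yet close enough to reflect the relevant additive structure. Calibrating these two scales, especially when iterated subset sums allow carries to cascade across many orders of magnitude, is where the bulk of the technical work will sit, and is why the colouring is necessarily more elaborate than the disjoint-support colourings used in the standard ``bad colouring'' arguments cited in the introduction.
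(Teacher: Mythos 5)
Your proposal is a plan rather than a proof. You correctly identify the central difficulty — the $y_i$ cannot be assumed to have disjoint binary supports, so subset sums produce carry cascades that a naive colouring cannot track — but you explicitly defer the work of overcoming it (``Calibrating these two scales\dots is where the bulk of the technical work will sit''), and the colouring is never actually specified. What you have is a correct restatement of the obstacle, not a construction.

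Beyond incompleteness, the mechanism you propose is different from, and arguably weaker than, what the paper uses. Your idea is a scale-dependent ``wide blocks'' colouring: partition digit positions into geometrically spaced blocks and read off the block index of the leading bit of $n$ together with a window of digits of $m$. Two problems. First, the block index is unbounded, so for this to be a finite colouring you must reduce it modulo something, and you never say how; once you do, consecutive partial sums $S_N$ can land in the same residue class by jumping a multiple of the modulus, so the ``small perturbation'' heuristic gives no contradiction. Second, the colouring is fixed before the adversary chooses $(y_n)$, so the $y_i$ can be scaled and placed to straddle your block boundaries or to concentrate inside a single block; a colouring calibrated to a particular scale has no purchase on an adversary free to choose all scales. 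The paper sidesteps this entirely by using \emph{scale-invariant} invariants: residues mod~$3$ of the first and last digit positions of $b-a$ and of $a$ (forcing the $y_i$ into a ``staircase'' Type~A pattern), the parity of the count $J(a,b)$ of `2-to-1' jumps and of the count $I(b-a)$ of maximal runs of ones (forcing overlapping supports and short-range carries, and killing the residual Type~B case), and finally the parity of the common-fragment count $F(a,b)$. Each of these counts changes by exactly one under removing or inserting a single term with controlled support, which is what produces the parity contradictions. None of these ideas — the mod-$3$ positional residues, the jump count, the interval count, the fragment count, nor the reduction to Type~A via block subsequences — appear in your proposal, and without some analogue of them your plan cannot be completed as written.
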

\begin{proof}
Our construction of the colouring will be in several stages. At each
stage, we add more colours, meaning that we take the product colouring
of the colouring we have so far with a new colouring. The conditions on
a supposed sequence $(y_n)_{n\geq 1}$ satisfying the conditions in
Theorem 4 will thus become more and more stringent, eventually resulting
in a
contradiction.
\par

As the colouring is rather complex, we give a brief
overview of what each stage is supposed to achieve. We first need some
notation. We work with natural numbers in their binary form, so
strings of `0' and `1'. The \textit{position} of a digit is the power of
2 it represents. The \textit{first} digit of $n$ in binary is at
position $i$, where $i$ is the greatest non-negative integer such that
$2^{i}$ divides $n$. The \textit{last} digit of $n$ in binary is at
position $j$, where $2^j\leq n<2^{j+1}$. The \textit{support} of $n$ is
the set of positions having the digit `1' in its binary expansion. For
example, let
$n=2^7+2^6+2^3$. Below $n$ is shown in binary, where the first row
represents the position of each digit. The support of $n$ is
$\{3,6,7\}$.
\begin{center}

\begin{tabular}{|c|c|c|c|c|c|c|c|c|}
\hline
Position number&7&6&5&4&3&2&1&0\\
\hline
Binary digit of $n$&1&1&0&0&1&0&0&0\\
\hline
&last digit& & & &first digit& & & \\
\hline

\hline
\end{tabular}
\end{center}
\par
In Stage 1 we will
ensure that the supports (in binary) of the $y_n$ do roughly `go off to
the left'. What we hope to achieve is a `staircase' pattern. More
precisely,
writing $n_i$ for the position of the first digit of $y_i$ and $m_i$ for the position of its last digit, we would like to ensure that the $n_i$ and the $m_i$ form
strictly increasing sequences, with $y_{i+2}$ starting to the left of
where $y_i$ ends for all $i$. This is the idea behind the definition of
`Type A' below. However, it will turn out that we cannot always achieve
this, and so there is a residual case to deal with that we call `Type
B',
which represents what happens when there is no way to pass to Type A. Of
course, we cannot ignore this case, but somehow it has the feel of an
annoying special case: the reader should perhaps view Type A as the
`main'
case. Roughly speaking, the sequence is of Type B when the sequence,
with
$y_1$ removed, is of Type A, but also $y_1$ starts where $y_2$ starts,
and
$y_1+y_2$ starts where $y_3$ starts, and $y_1+y_2+y_3$ starts where
$y_4$ starts, and so on.
\par
Then Stage 2 gives that the supports of the $y_i$, despite having the
above
staircase pattern, cannot be disjoint. And it
then starts to deal with the unpleasant issues arising from `carry
digits', that arise when adding numbers whose supports are not disjoint.
It will
give that the carries must be short-range, in a certain sense. The fact
that we forbid the carries to propagate arbitrarily far will actually
show that Type B cannot occur. And
finally, Stage 3 will eliminate these short-range carries as well.
\par

We are now ready to turn to the proof itself. As stated above, we
construct our colouring step by step. When colouring
a pair $(a,b)$, $a<b$, we will often look at just $a$, just $b$, or just
$b-a$. At other times we will make full use of the fact that we are
colouring pairs, not just numbers.  \\
\par \textit{Stage 1.} To start with, our colours are quadruples $(c_0,
c_1, c_2, c_3)$ where $c_0, c_1, c_2\in\{0,1,2\}$ and $c_3$ is one of
the four possible bit-strings of length 3 having `1' at their rightmost
positions. We give the colour
$(c_0,c_1, c_2, c_3)$ to the pair $(a,b)$, $a<b$, if the last digit of
$b-a$ in binary is at position $c_0$ modulo 3, the first digit of $b-a$
in binary is at position $c_1$ modulo 3, the last digit of $a$ in binary
is at position $c_2$ modulo 3, and the first 3 digits of $b-a$ form,
from left to right, $c_3$.\\
Assume now $(y_n)_{n\geq 1}$ is a sequence that satisfies the conditions
of Theorem 4 for the above colouring. The pairs that are all the same
colour are the pairs of the form
$(y_1+y_{k_1}+\cdots+y_{k_{t-1}}+y_{k_t},
y_1+y_2+\cdots+y_{k_{t+1}})$ for some $t\geq 0$ and $1<k_1<\cdots<k_t<k_{t+1}$. The differences $b-a$, where $(a,b)$ is a pair of the above form, are precisely the sums of the form $y_{k_1}+\cdots +y_{k_t}$ for some $t\geq1$ and $1<k_1<\cdots<k_t$. It follows that any such
finite sum must have the same first 3 digits, with the first digit at a
fixed position modulo 3.
\begin{claim1}
There do not exist $j>i>1$ such that $y_j$ and $y_i$ have their first
digit at the same position.
\end{claim1}
\begin {proof}
Assume that two such $y_i$ and $y_j$ exist. The position of their
first digit is the same, say $n_0$, and by our colouring they have the
same first 3 digits. On the other hand, the colouring also requires
$y_i+y_j$
to have the first digit at a position congruent to $n_0$ modulo 3.
However, adding two identical strings in binary shifts the support by
exactly one to the left. Hence, when we add $y_i$ and $y_j$, their first
`1', which was at position $n_0$ for both of them, is moved to position
$n_0+1\not\equiv n_0$ modulo 3, a contradiction.
\end{proof}
We now know that, except for possibly $y_1$, no two terms of the sequence start
at the same position.\\ \par Let $(z_n)_{n\geq 1}$ be a sequence of
natural numbers. We call $(w_n)_{n\geq 1}$ a \textit{full block
subsequence} or simply a  \textit{block subsequence} of $(z_n)_{n\geq
1}$
if there exists an increasing sequence of natural numbers $(k_n)_{n\geq 1}$ such
that $w_1=z_1+\cdots+z_{k_1}$ and $w_n=z_{k_{n-1}+1}+\cdots+z_{k_n}$ for
$n\geq 2$. We stress that there are no `gaps': every $z_n$ appears as a
summand in some $w_m$.\\We observe that if the sequence $(y_n)_{n\geq
1}$ satisfies the conditions in Theorem 4 for a given colouring then so
does any of its block subsequences. So, by passing to a block
subsequence, we may assume that $(y_n)_{n\geq 1}$ is strictly
increasing.\par Let $(z_n)_{n\geq 1}$ be a sequence of natural numbers.
Let $n_i$ be the position of the first digit
of $z_i$ and $m_i$ the position of the last digit of $z_i$, for all
$i\geq 1$. We call the sequence $(z_n)_{n\geq1}$ of
\textit{Type A} if for all $i\geq1$, $n_i<n_{i+1}$, and $m_i<m_{i+1}$,
and
$m_i+1<n_{i+2}$. We call the sequence $(z_n)_{n\geq1}$
of \textit{Type B} if none of its block subsequences is of Type A, the
sequence $(z_n)_{n\geq 2}$ is of Type A, and also $n_1=n_2$, and
$m_1<m_2$, and
$m_1+1<n_3$. We remark that this definition of  `Type B' is more
abstract that
the one informally described in the proof overview above: the reason is
that
we want this definition to capture the idea of `we cannot pass to Type
A'.
\begin{claim1}
By passing to a block subsequence, we may assume that $(y_n)_{n\geq1}$
is of either Type A or Type B.
\end{claim1} \begin{proof} As above, let the positions of the first and
last digits of $y_i$ be $n_i$ and $m_i$ respectively.\par Assume first
that there is no $k$ such that the first digit of $y_k$ is at the same
position as the first digit of $y_1$. We will prove that we can find a
block subsequence of Type A. We start with $y_1$. By Claim 1, only
finitely many terms have the position of their first digit at most the
position of the first digit of $y_1$. Let $y_{l_1}$ be the last one of
them. We replace $y_1$ by the consecutive sum $y_1+y_2+\cdots+y_{l_1}$
and relabel the sequence accordingly. Now we move on to the second term.
All terms after $y_1$ now have the position of their first digit greater
than that of $y_1$. Again, only finitely many terms have their first
digit at a position at most one plus the position of the last digit of
$y_1$. Let $y_{l_2}$ be the last one of them. We now replace $y_2$ by
$y_2+y_3+\cdots+y_{l_2}+y_{l_2+1}$ and again relabel the sequence. Now
all terms after $y_2$ have their first digit at a position greater than
one plus the position of the last digit of $y_1$. Also, only finitely
many have the position of their first digit at most one plus the position of the
last digit of $y_2$. Let $y_{l_3}$ be the last one of them. We replace
$y_3$ by $y_3+y_4+\cdots+y_{l_3+1}$. Now continue inductively. Hence, we
obtain a block subsequence of Type A.\par We now assume that
$(y_n)_{n\geq1}$ does not have any block subsequence of Type A.
Therefore, there is a $k$ such that the first digit of $y_1$ is at the
same position as the first digit of $y_k$. We now construct a block
subsequence of Type B.\\First we note that we may assume that there is no $i>1$ such that
$n_i < n_1$: if such an $i$ exists, then we replace $y_1$ with
$y_1+y_2+\cdots+y_i$ and relabel. This new block subsequence has the
property that the first digits of its terms are all on different
positions. Therefore, by the argument presented at the begining of the
proof, we can construct a block subsequence of Type A, which is a
contradiction.\\We fix $y_1$. We know that no $y_i$ starts before it and
only $y_k$ starts at the same position. Only finitely many $y_i$ have
their first digit at a position at most one plus the position of the
last digit of $y_1$. Let $y_s$ be the last of them and let
$t=\text{max}(s+1,k)$. We now replace $y_2$ with $y_2+\cdots+y_t$.
Note that in this block subsequence $y_1$ and $y_2$ start on the same
position, and $m_1<m_2$. Since from now on the terms start on different
positions, we repeat the inductive construction presented at the beginning
of the proof and thus obtain the desired block subsequence of Type B.
\end{proof}
\par In what follows, a property that will play a crucial role is the
fact that any block subsequence of $(y_n)_{n\geq1}$ still satisfies
Claim 2. More precisely, as we now show, Type A sequences are invariant
under taking
block subsequences, and the same holds for Type B sequences. That is a
direct consequence of binary addition and our colouring so
far.
\begin{claim1} If $(y_n)_{n\geq1}$ satisfies
the conditions in Theorem 4 for the above colouring and is of Type A,  then the
same also holds for each of its block subsequences, and similarly for Type B. \end{claim1}
\begin{proof} Consider a sum $y_m+y_{m+1}+\cdots+y_n$, where $2\leq
m\leq n$. In any given position, at most two of the summands have a
digit 1 and so the last digit of the sum is either at the same position
as the last digit of $y_n$, or one greater. Because of the colour $c_0$,
we conclude that the last digit of $y_m+y_{m+1}+\cdots+y_n$ is at the
same position as the last digit of $y_n$.\\If $n\geq 3$, the sum
$y_1+y_2+\cdots+y_n$ has the last digit at the same position modulo 3 as
$y_1+y_n$, by $c_2$. Since $y_n$ and $y_1$ have disjoint supports, the
last digit of $y_1+y_n$ is at the same position as the last digit of
$y_n$. Similarly as above, the last digit of the sum
$y_1+y_2+\cdots+y_n$ is either at the same position as the last digit of
$y_n$, or one position greater. We conclude that the last digit of
$y_1+y_2+\cdots+y_n$ is at the same position as the last digit of $y_n$,
for $n\geq 3$. \\Finally, we look at $y_1+y_2$. Its last digit is either
at the same position as the last digit of $y_2$, or one position
greater. By $c_2$, the position of its last digit has to agree modulo 3
with the position of the last digit of $y_1+y_3$, which is the position
of the last digit of $y_3$, by disjointness. However, by $c_0$, $y_2$
and $y_3$ have the last digit at the same position modulo 3. We conclude
that the last digit of $y_1+y_2$ is at the same position as the last
digit of $y_2$.\\Thus, we have that for any $1\leq m\leq n$, the
position of the last digit of $y_m+\cdots+y_n$ is the position of the
last digit of $y_n$.\\ If the sequence $(y_n)_{n\geq1}$ is of Type A,
then the position of the first digit of $y_m+\cdots+y_n$ is the position of the first digit of $y_m$ for all $1\leq m\leq n$. Combining these two observations
we obtain that by passing to a block subsequence, we also obtain a Type
A sequence.\\If the sequence is of Type B, the first digit of
$y_m+\cdots+y_n$ is at the same position as the first digit of $y_m$ if
$m>1$. If $m=1$, then $y_1+\cdots+y_n$ has to start at the same position
as some other term $y_t + y_{t+1} + \cdots + y_{t+s}$, where $t \geq n+1$. 
This is because $(y_n)_{n\geq1}$ is of Type B and
thus cannot have any block subsequence of Type A, which can always be
constructed from a sequence with terms starting at different positions.
Since the last digit of this sum is at the same position as the last digit of $y_n$ which is
less than the position of the first digit of $y_{n+2}$, we must have that the first
digit of $y_1+\cdots+y_n$ is at the same position as the first digit of $y_{n+1}$. This shows
that any block subsequence is of Type B.\end{proof} We note that Claim 3 
also implies that the last digit of any sum is at the same position as the last digit of its
biggest term.\\
\par\textit{Stage 2.} Let $a$, $b$ with $a<b$ be a pair of natural
numbers. We write $a$ and $b$ in binary and we call a position $i$ a `2'
if both $a$ and $b$ have at position $i$ the digit 1. We call a position
$i$ a `1' if exactly one of $a$ and $b$ has at position $i$ the digit 1.
We define the \textit{number of `2 to 1'-jumps} of $(a,b)$, denoted by
$J(a,b)$, to be the number of transitions from a `2' to a `1' as we
traverse the positions in increasing order, ignoring the positions where
both numbers have a `0'. For example, if $a=100000100$ and $b=1101010111$,
then the positions labelled `1' are 0,1,4,6 and 9, the positions labelled `2'
are 2 and 8 and the positions ignored are 3,5 and 7. Thus the number of `2
to 1'-jumps is 2, namely the jump from position 2 to position 4 and the
jump from position 8 to position 9.\par Let $c$ be a natural number and
$c=l_pl_{p-1}\cdots l_1$ its binary representation. We call a binary
string
not containing a `0', $a_s\cdots a_1=1\cdots1$, an \textit{interval} of
$c$ if there exits $1\leq i\leq p-s+1$ such that $l_{i+s-1}\cdots
l_{i}=a_s\cdots
a_1$, $l_{i-1}=0$ or $i=1$, and $l_{i+s}=0$ or $i+s=p+1$. We denote by
$I(c)$ the number of intervals of $c$, counted with multiplicity. For
example, if $c=11101110010101$, then $I(c)=5$ since $c$ has two
intervals of length 3 and three intervals of length 1.\par We
now incorporate this into the colouring: we define a new colouring by
colouring $(a,b)$ by $(c_0,c_1,c_2,c_3,c_4,c_5)$ where $c_0$, $c_1$,
$c_2$ and $c_3$ are defined above, and $c_4=J(a,b)\mod 2$,
$c_5=I(b-a)\mod2$, with $c_{4},c_5=0$ or 1. So if $(y_n)_{n\geq 1}$
satisfies the conditions in Theorem 4 for this new colouring then it has
all the properties we have already established, in addition to any new
properties that may be forced by the new part of the colouring.\\We say
that two numbers $a$ and $b$, with $a<b$, have {\it right to left
disjoint supports} if the last digit of $a$ is at a position smaller
than the position of the first digit of $b$.
\begin{claim1} By passing to a block subsequence, we may assume  that
there is no $i\in\mathbb N$ such that both the pair $y_i,y_{i+1}$ and
the pair $y_{i+1},y_{i+2}$ have right to left disjoint supports.
\end{claim1}
\begin{proof} Assume that such an $i$ exists. As the cases $i=1$ and $i=2$ are slightly
different to the general case, we analyse them separately.
\begin{enumerate}
\item If $i=1$, we look at the colour of the pairs $(y_1+y_3,
y_1+y_2+y_3+y_4)$ and $(y_1+y_2+y_3, y_1+y_2+y_3+y_4)$, which have to be
the same colour. In particular, the value of $ J\mod 2$ has to be the
same. However, when we add $y_2$ to $y_1+y_3$, we eliminate exactly one
`2 to 1'-jump, namely the one where we moved from the last `2' in the
support of $y_1$ to the first `1' in the support of $y_2$. By the
disjointness of the supports, $y_2$ does not interact with $y_1$ or
$y_3$, so indeed the value of $J$ changes by exactly 1, a contradiction.
\item If $i=2$, we do not necessarily have that the supports of $y_1$
and $y_2$ are right to left disjoint. However, we observe that our
colouring requires that the position of the last digit of $y_1+y_2+y_3$
is the position of the last digit of $y_3$. This tells us that
$y_1+y_2+y_3$ and $y_4$ have right to left disjoint supports. By
replacing $y_1$ with $y_1+y_2+y_3$ and relabelling the rest of the
sequence, we may assume that $y_1$ and $y_2$ have right to left disjoint
supports. With this assumption, if $y_2$, $y_3$ and $y_4$ still have
right to left disjoint supports, we see that (with this choice of 
block subsequence) we are back in Case 1.
\item If $i>2$, then we look at the colour of the pairs
$(y_1+y_2+\cdots+y_i+y_{i+2}, y_1+y_2+\cdots+y_{i+3})$ and
$(y_1+y_2+\cdots+y_{i+2}, y_1+y_2+\cdots+y_{i+3})$. As we argued above,
the position of the last digit of $y_1+y_2+\cdots+y_i$ is the
position of the last digit of $y_i$. This means that
$y_1+y_2+\cdots+y_i$, $y_{i+1}$ and $y_{i+2}$ have right to left
disjoint supports, thus this case is analogous to Case 1.
\end{enumerate}
Therefore, we cannot have 3 consecutive terms with right to left
disjoint supports.
\end{proof}
\begin{claim1} By passing to a block subsequence, we may assume that the
sequence $(y_n)_{n\geq 1}$ contains no two consecutive terms with right
to left disjoint supports.
\end{claim1}
\begin{proof}
Using Claim 4, we construct the new block subsequence $(z_n)_{n\geq1}$
inductively, with each term being either a $y_i$ or a sum of two
consecutive $y_i$. If $y_1$ and $y_2$ do not have right to left disjoint
supports, we do not change them. If they do, then $y_2$ and $y_3$ do not
have right to left disjoint supports. We then replace $y_1$ by $y_1+y_2$
and relabel the sequence. Thus now the first and the second terms do not
have right to left disjoint supports.\\Assume we have built our block
subsequence up to the $i^{\text{th}}$ term: thus we have
$(z_n)_{n=1}^i$, with $z_i$ being the sum of at most two consecutive
terms of our original sequence. Thus $z_i=y_{k}+y_{k+1}$ or
$z_i=y_{k+1}$, for some $k\in\mathbb N$. If $y_{k+1}$ and $y_{k+2}$ do
not have right to left disjoint supports, then we let $z_{i+1}=y_{k+2}$.
If on the other hand $y_{k+1}$ and $y_{k+2}$ do have right to left
disjoint supports, then $y_{k+2}$ and $y_{k+3}$ do not have right to
left disjoint supports, so we replace $z_i$ by $z_i+y_{k+2}$ and set
$z_{i+1}=y_{k+3}$. We note that since $y_{k+1}$ and $y_{k+2}$ have right
to left disjoint supports, by Claim 4 this implies that $y_k$ and
$y_{k+1}$ do not have right to left disjoint supports, thus, by our
inductive construction we must have $z_i=y_{k+1}$. Hence, when we
perform the inductive step in this case, $z_i$ will be replaced by
$y_{k+1}+y_{k+2}$, a sum of two consective terms of our original
sequence. This gives us our block subsequence up to the
$(i+1)^{th}$ term.
\end{proof}
Note that Claim 5 is true for any block subsequence of $(y_n)_{n\geq 1}$
as well. This is an immediate consequence of Claim 2 and the fact that
Claim 2 is preserved by passing to any block subsequence.\\
\par For a natural number $n$ we denote the positions of its last and
first digits by $l_n$ and $f_n$, respectively. Let $a<b$ with $f_a<f_b$,
$l_a<l_b$, and $a$ and $b$ having disjoint supports, but not right to
left disjoint supports. We define a \textit{fragment of b in a} to be a
maximal binary string in $a+b$ that appears in $b$ at the same
positions, has the digit 1 at its last position, and is situated between
$l_a$ and $f_b$ inclusive. More formally, let
$a+b=r_{k_1}r_{k_1-1}\cdots r_1$, $b=b_{k_2}b_{k_2-1}\cdots b_1$  and
$a=a_{k_3}a_{k_3-1}\cdots a_1$ be the binary representations of $a+b$,
$b$ and $a$. A binary string $s_ks_{k-1}\cdots s_1$ is called a
\textit{fragment of $b$ in $a$} if there exists a positive integer $t$
such that $k+t-1\leq l_a$, $t\geq f_b$, $r_{k+t-1}r_{k+t-2}\cdots
r_t=b_{k+t-1}b_{k+t-2}\cdots
b_t=s_ks_{k-1}\cdots s_1$, $r_{k+t-1}=b_{k+t-1}=1$, $r_{t-1}\neq
b_{t-1}$ or $t=f_b$, and there exists no binary string $w_d\cdots w_1$
with $w_d=1$ such that $w_d\cdots w_1=b_{k+t+d-1}\cdots
b_{k+t}=r_{k+t+d-1}\cdots r_{k+t}$ and $k+t+d-1\leq l_a$. We sometimes
refer to these fragments as the \textit{right
fragments of $b$ in $a$}. Similarly, a binary string $s_ps_{p-1}\cdots
s_1$ is called a \textit{fragment of $a$ in $b$} if there exists a
positive integer $l$ such that $p+l-1\leq l_a$, $l\geq f_b$,
$r_{p+l-1}r_{p+l-2}\cdots
r_l=a_{p+l-1}a_{p+l-2}\cdots a_l=s_ps_{p-1}\cdots s_1$,
$r_{p+l-1}=a_{p+l-1}=1$, $r_{l-1}\neq a_{l-1}$ or $l=f_b$, and there
exists no binary string $v_e\cdots v_1$ with $v_e=1$ such that
$v_e\cdots v_1=a_{p+l+e-1}\cdots a_{p+l}=r_{p+l+e-1}\cdots r_{p+l}$ and
$p+l+e-1\leq l_a$. We sometimes refer to
these fragments as the \textit{left fragments of $a$ in $b$}. Note that 
there is always at least one left fragment of $a$ in $b$ and at least 
one right fragment of $b$ in $a$, because $a$ and $b$ have disjoint 
supports but \textit{not} right to left disjoint supports. The picture below illustrates this definition in the case where there is only one fragment.
\begin{alignat*}{9}
& a\hspace{2em} & & \overset{\text{left fragment of
}a\text{ in }b}{\underset{1***\cdots}{\rule{10em}{1pt}}} & & \overset{a}{\rule{3em}{1pt}} \\
& b & \overset{b}{\rule{3em}{1pt}}& & \overset{\text{right fragment of
}b\text{ in }a}{\underset{1***\cdots}{\rule{10em}{1pt}}} & 
\end{alignat*}

\par Now
let $a<b<c$ with the property that $l_a<l_b<l_c$, $f_a<f_b<f_c$,
$l_a+1<f_c$, and such that they have disjoint supports, but the pairs
$(a,b)$ and $(b,c)$ do not have right to left disjoint supports. The
\textit{fragments of $b$
with respect to $a$ and $c$} are the fragments of $b$ in $a$ together
with the fragments of $b$ in $c$. Whenever we count fragments, we count
them with multiplicity -- so for example, if the string 10110 occurs as a
fragment in two different places, then we count this as two
fragments. Note that fragments do not overlap by the maximality
condition. \par Let $p<r<s$ be three natural numbers with $f_p<f_r<f_s$,
$l_p<l_r<l_s$, $l_p\geq f_r$, $l_r\geq f_s$ and $l_p+1<f_s$.
We define \textit{the centre of r with respect to $p$ and $s$} to be the
binary string in $r$ situated strictly between $l_p$ and $f_s$. We note
that the centre of $r$ cannot be the empty string, although, unlike a
fragment in the disjoint case, it can certainly be a string of `0's.
\par The picture below
illustrates the concepts we have just defined. The fragments are with
respect to the three numbers $a$, $b$ and $c$. For example, the centre
of $b$ is the centre of $b$ with respect to $a$ and $c$. Here there is
only one left fragment of $a$ and only one right fragment of $b$; in
general, of course, there could be several, alternating from one to the
other.
\begin{alignat*}{9}
& a\hspace{2em} &&  &&  && &&  && \overset{\text{left fragment of
}a}{\underset{1***\cdots}{\l}} && && \overset{a}{\rule{3em}{1pt}} \\
& b &&  && \overset{\text{left fragment of
}b}{\underset{1***\cdots}{\l}} &&  && \overset{\text{the centre of
b}}{\l} &&  && \overset{\text{right fragment of
}b}{\underset{1***\cdots}{\l}} && && \\
& c && \overset{c}{\rule{3em}{1pt}} &&  &&\overset{\text{right fragment
of }c}{\underset{1***\cdots}{\l}}  && && && &&
\end{alignat*}\par When working with a sequence $(y_n)_{n\geq 1}$, we
consider the fragments or the centre of a term or of a consecutive sum
of terms to be with respect to its neighbours. In other words, for any
$1<i<j$, the fragments and centre of $y_i+y_{i+1}+\cdots+y_{j-1}$ are
implicitly understood to be with respect to $y_{i-1}$ and $y_{j}$.\par
Let $m$ and $n$ be two natural numbers such that $l_m<l_n$ and
$f_m<f_n$. For each $i$, let $m_i$, $n_i$ and $(m+n)_i$ be the digits of
$m$, $n$ and $m+n$ at position $i$, respectively. When adding $m$ and
$n$ in binary, it is convenient to refer to the minimal interval in
which all binary carrying occur as the \textit{carry region} or just the
\textit{carry}. More precisely, the carry region \textit{starts} at the
least $i$ for which $m_i=n_i=1$, and \textit{stops} at position $k$,
where $k$ is the maximum $i$ such that $(m+n)_i\neq m_i+n_i$. For
example, if $m$ is 100111010010 and $n$ is 1010011011100, then the carry
starts at position 4 and stops at position 9.
\begin{claim1} There exists no $i\geq1$ with the following property: $y_i$, $y_{i+1}$,
$y_{i+2}$, $y_{i+3}$ and $y_{i+4}$ have pairwise disjoint supports and
each of the centres of $y_{i+1}$, $y_{i+2}$, $y_{i+3}$, $y_{i+4}$,
$y_{i+1}+y_{i+2}$ and $y_{i+2}+y_{i+3}$ are a string of `1's.\end{claim1}
\begin{proof}
Suppose for a contradiction that such an $i$ exists. Let $y_{i+1}$ have
$k_1$ intervals (i.e.~$k_1$ disjoint strings of `1's) between the position of the last digit of $y_i$ and the
position of its first digit inclusive, and $k_2$ intervals between the
position of its last digit and the position of the first digit of
$y_{i+2}$ inclusive. Because we assumed the centre of $y_{i+1}+y_{i+2}$
is a string of `1's, we get that $y_{i+1}$ and $y_{i+2}$ complement each
other between the position of the first digit of $y_{i+2}$ and the
position of the last digit of $y_{i+1}$ inclusive. Therefore $y_{i+2}$
has $k_2$ intervals between these 2 positions too. Similarly, if
$y_{i+2}$ has $k_3$ intervals between the position of its last digit and
the position of the first digit of $y_{i+3}$, then so does $y_{i+3}$.
Finally, let $y_{i+3}$ have $k_4$ intervals between the position of its
last digit and the position of the first digit of $y_{i+4}$
inclusive.\par The reader might find the diagram below helpful, where the two dotted  fragments are intervals as a result of $y_{i+1}$ and $y_{i+2}$ complementing each other in order to have an interval as the centre of $y_{i+1}+y_{i+2}$. In the example below we have $k_2=1$, and only one right fragment of $y_{i+1}$ in $y_i$ that contains $k_1$ fragments. The number $k_1$ does not depend on the number of such fragments: it is the sum of the number of intervals in the fragments.
\begin{alignat*}{9}
& y_i\hspace{2em} &&  &&  && &&  && \overset{y_i}{\l} && && \overset{y_i}{\rule{3em}{1pt}} \\
& y_{i+1} &&  && \overset{\text{interval}}{\underset{11\cdots1}{\dhorline{6em}{3pt}}} &&  && \overset{\text{interval (centre of }y_{i+1})
}{\underset{11\cdots1}{\rule{8em}{1pt}}} &&  && \overset{y_{i+1}}{\underset{k_1 \text{intervals here}}{\l}} && && \\
& y_{i+2} && \overset{y_{i+2}}{\rule{3em}{1pt}} &&  &&\overset{\text{interval}}{\underset{11\cdots1}{\dhorline{6em}{3pt}}}&& && && &&\end{alignat*}Since each centre is an interval and all numbers have
disjoint supports, we get that $y_{i+1}$ has $1+k_1+k_2$ intervals,
$y_{i+2}$ has $1+k_2+k_3$ intervals, $y_{i+3}$ has $1+k_3+k_4$
intervals,
$y_{i+1}+y_{i+2}$ has $1+k_1+k_3$ intervals, and $y_{i+1}+y_{i+3}$ has
$k_1+k_2+k_3+k_4+2$ intervals since $y_{i+1}$ and $y_{i+3}$ have
disjoint right to left supports that are at least one position
apart.\\By looking at the $I$ value of these numbers, $c_5$ tells us
that\begin{center}$1+k_1+k_2\equiv
1+k_2+k_3\equiv1+k_3+k_4\equiv1+k_1+k_3\equiv k_1+k_2+k_3+k_4+2 \mod
2.$\end{center}
The first four equations imply that $k_1$, $k_2$, $k_3$ and $k_4$ have
the same parity. Hence $k_1+k_2+k_3+k_4+2$ is even, which implies that
$k_1+k_2+1$ is even, a contradiction.
\end{proof}
\par It is important to note that Claim 6 implies that our sequence
$(y_n)_{n\geq
1}$, and thus any of its block subsequences, cannot be of Type B.
Indeed, if the sequence $(y_n)_{n\geq 1}$ is of Type B, then so are all of
its block subsequences, and so the first
digit of $y_1+y_2+\cdots+y_k$ is at the same position as the first digit
of $y_{k+1}$ for all $k\geq 1$. If we first look at $y_1$, $y_2$ and
$y_3$, we notice that the above conditions imply that the centre of
$y_2$ has to be an interval, otherwise the carry in $y_1+y_2$ would stop
before the position of the first digit of $y_3$. Moreover, if we look at
the block subsequence obtained by just replacing $y_2$ with $y_2+y_3$,
we must also have that the centre of $y_2+y_3$ is an interval. This
immediately implies that $y_2$ and $y_3$ must have disjoint supports,
otherwise the first position they both have a `1' at will become a `0'
in $y_2+y_3$, as well as being part of the centre.\par Recapping, we have shown that if $(y_n)_{n\geq 1}$ is of Type B then the centre of $y_2$ (with respect to $y_1$ and $y_3$) is an interval, the centre of $y_2+y_3$ (with respect to $y_1$ and $y_4$) is an interval, and $y_2$ and $y_3$ have disjoint supports. Passing to the block subsequence $y_1+y_2, y_3, y_4, \cdots$ and repeating the argument, we find that the centre of $y_3$ (with respect to $y_1+y_2$ and $y_4$) is an interval, the centre of $y_3+y_4$ (with respect to $y_1+y_2$ and $y_5$) is an interval, and $y_3$ and $y_4$ have disjoint supports. By Claim 3, the position of the last digit of $y_1+y_2$ is the same as that of $y_2$, so the centre of $y_3$ with respect to $y_1+y_2$ and $y_4$ is the same as the centre of $y_3$ (with respect to $y_2$ and $y_4$), and similarly for $y_3+y_4$. Continuing
inductively, we obtain that for all $n\geq 2$ the centres of $y_n$ and $y_n+y_{n+1}$ are intervals, and the terms $y_n$ and $y_{n+1}$ have disjoint supports, which contradicts Claim 6.\par Therefore we can guarantees
that in what follows all sequences are of Type A.
\begin{claim1} There exists no $i\in\mathbb N$ such that $y_i$,
$y_{i+1}$, $y_{i+2}$, \ldots, $y_{i+15}$ have pairwise disjoint
supports.
\end{claim1}
\begin{proof} Suppose for a contradiction that such an $i$ exists. We
will find a block subsequence of $(y_n)_{n\geq 1}$ that will not satisfy
the conditions in Theorem 4, a contradiction. By Claim 2 we know that if
three consecutive terms have disjoint supports, then the positions
between the first and the last digit of their sum inclusive can be
partitioned into fragments such that each fragment corresponds to
exactly one term $y_i$, as illustrated below.
\begin{alignat*}{9}
& y_i\hspace{2em} &&  &&  && &&  && \overset{y_i}{\rule{4em}{1pt}} && &&
\overset{y_i}{\rule{4em}{1pt}} \\
& y_{i+1} &&  && \overset{y_{i+1}}{\rule{4em}{1pt}} &&  &&
\overset{\text{the centre of } y_{i+1}}{\rule{6em}{1pt}} &&  &&
\overset{y_{i+1}}{\rule{4em}{1pt}} && && \\
& y_{i+2} && \overset{y_{i+2}}{\rule{4em}{1pt}} &&
&&\overset{y_{i+2}}{\rule{4em}{1pt}}  && && && &&
\end{alignat*}
We immediately observe that every fragment in the picture, except for
the centre of $y_{i+1}$, has to contain the digit 1, by definition of
fragments.\\
As we noted above, the centres can be strings of `0'. However, since the
last
digit of $y_{i+1}$ is contained in the centre of $y_{i+1}+y_{i+2}$ that
sits between $y_i$ and $y_{i+3}$, we can replace $y_{i+1}$ with
$y_{i+1}+y_{i+2}$, $y_{i+2}$ with $y_{i+3}+y_{y+4}$, $y_{i+3}$ with
$y_{i+5}+y_{i+6}$, \ldots, $y_{i+7}$ with $y_{i+13}+y_{i+14}$ and
relabel the sequence. Thus, by passing to a block subsequence, we may
assume that we can find 9 consecutive terms, $y_k$, $y_{k+1}$,
$y_{k+2}$, $y_{k+3}$, $y_{k+4}$, \ldots, $y_{k+8}$, such that they have
disjoint supports and the centre of $y_{k+1}$, $y_{k+2}$, $\ldots$,
$y_{k+7}$ all contain the digit 1.\\
The next step is to look at what happens with the sum
$y_1+y_2+\cdots+y_k$. We know that, by disjointness, at the position of
the first digit of $y_{k+1}$, $y_k$ has a `0'. If the centre of
$y_k$ contains at
least one `0', or $k=1$, then the sum $y_1+y_2+\cdots+y_k$ and $y_{k+1}$
have the
same fragment interaction as $y_k$ and $y_{k+1}$ (in other words, the
fragments of $y_k$ in $y_{k+1}$ are the same as the fragments of
$y_1+y_2+\cdots+y_k$ in $y_{k+1}$, and the fragments of $y_{k+1}$ in
$y_{k}$ are the same as the fragments of $y_{k+1}$ in
$y_1+y_2+\cdots+y_k$) since the carry stops before the fragments start,
and when $k=1$ there is no carry to consider as the above sum is just
$y_1$.
Here we used the fact that the last digit of $y_1+y_2+\cdots+y_{k-1}$ is at the same position as the last digit of $y_{k-1}$ for $k\geq 2$.\\However, Claim 6 tells us that amongst 5 consecutive terms with disjoint supports, we can always find one or a sum of two consecutive terms that does not have the centre a string of
`1's (since Claim 6 is invariant under taking block subsequences). Therefore, by passing to a block subsequence or ignoring some
previous terms, we can assume that the centre of $y_k$ is not an
interval, or $k=1$.\\ Finally, by passing to a block subsequence, we may
assume
that we can find 5 consecutive terms $y_t$, $y_{t+1}$, $y_{t+2}$,
$y_{t+3}$ and $y_{t+4}$ such that the centres of $y_{t+1}$, $y_{t+2}$
and $y_{t+3}$ each contain at least one `1', $y_1+y_2+\cdots+y_t$
interacts with the fragments of $y_{t+1}$ the same way $y_t$ does, and
all 5 terms have pairwise disjoint supports.
\\ We now look at the value of $J$ for the following pairs:
$(y_1+\cdots+y_t+y_{t+3}, y_1+y_2+\cdots+ y_{t+4})$,
$(y_1+\cdots+y_t+y_{t+2}+y_{t+3}, y_1+y_2+\cdots+y_{t+4})$,
$(y_1+\cdots+y_t+y_{t+1}+y_{t+3},y_1+y_2+\cdots+y_{t+4})$ and
$(y_1+\cdots+y_{t+3}, y_1+\cdots+y_{t+4})$. Let $y_{t+1}$ have $l_{t+1}$
fragments on its left and $r_{t+1}$ fragments on its right. We define
$r_{t+2}$, $r_{t+3}$, $l_{t+2}$ and $l_{t+3}$ similarly. We notice that
$y_{t+1}+y_{t+2}$ has $l_{t+2}$ fragments on its left and $r_{t+1}$
fragments on its right. We also notice, by the definition of fragments,
that $r_{t+2}=l_{t+1}$. If we look at the first pair above, the term
$y_{t+1}+y_{t+2}$ is missing from the first sum. So the non-zero digits in its fragments will
all be labelled `1'. Therefore, its right fragments will give $r_{t+1}+1$
jumps, while its left fragments will give $l_{t+2}$ jumps. Hence, the
missing term gives $r_{t+1}+l_{t+2}+1$ jumps. Similarly for the next two
pairs, the missing terms give $r_{t+1}+l_{t+1}+1$ and
$r_{t+2}+l_{t+2}+1$ jumps, respectively. For the last pair there is no
missing term, so the jumps come from the interaction between $y_{t+3}$
and $y_{t+4}$, which is identical for the other three pairs by
disjointness. All the other digits in all four pairs remain
unchanged.\par The explanation above is summarised as follows:
\begin{center}
$J(y_1+\cdots+y_t+y_{t+3}, y_1+\cdots+y_{t+4})-J(y_1+\cdots+ y_{t+3}, y_1+\cdots +y_{t+4})=r_{t+1}+l_{t+2}+1$,\\
$J(y_1+\cdots+ y_t+y_{t+2}+y_{t+3}, y_1+\cdots+y_{t+4})-J(y_1+\cdots+y_{t+3}, y_1+\cdots+y_{t+4})=r_{t+1}+l_{t+1}+1$,\\
$J(y_1+\cdots y_t+y_{t+1}+y_{t+3}, y_1+\cdots +y_{t+4})-J(y_1+\cdots+y_{t+3}, y_1+\cdots+y_{t+4})=r_{t+2}+l_{t+2}+1$.
\end{center}
Since our coloring asks for the $J$ values to have same parity, we need $0\equiv r_{t+1}+l_{t+2}+1\equiv
r_{t+1}+l_{t+1}+1\equiv r_{t+2}+l_{t+2}+1\mod 2$.
Because $r_{t+2}=l_{t+1}$, the last equation tells us that $l_{t+2}$ and
$l_{t+1}$ have different parities. However, by taking the difference of
the first two equations, we must have that they have the same parity, a
contradiction.\end{proof}
\begin{claim1} By passing to a block subsequence, we may assume that the
sequence $(y_n)_{n\geq 1}$ contains no two consecutive terms with
disjoint supports.\end{claim1}
\begin{proof}
The same as the proof of Claim 5.\end{proof}
\begin{claim1} By passing to a block subsequence, we may assume that for
every $n\geq 1$ the carry in any sum where the biggest term is $y_n$,
stops before the position of the first digit of $y_{n+1}$.
\end{claim1}
\begin{proof}
As in Claim 7, it is enough to show that for every $n\geq 2$, the centre
of every $y_n$ contains at least one `0'. We will prove
this by induction, replacing terms by consecutive sums and relabelling,
and also bearing in mind that our initial sequence does not have any two
consecutive terms with disjoint supports. Assume we have built the
sequence with the desired property up to the $n^{\text{th}}$ term. The
terms $y_{n+1}$ and $y_{n+2}$ are consecutive terms of the original
sequence, so their supports are not disjoint. If the centre of
$y_{n+1}$ contains a `0', then we have found the ${(n+1)}^{\text{th}}$
term. If it does not contain a `0', then we take $y_{n+1}+y_{n+2}$ to be
the ${(n+1)}^{\text{th}}$ term. To see that this satisfies the claim, we
notice that since $y_{n+1}$ and $y_{n+2}$ do not have disjoint supports,
the first position at which both have a `1', becomes a `0' in
$y_{n+1}+y_{n+2}$. As the sequence $(y_n)_{n \geq 1}$ is of Type A, we see that that position is part
of the centre of $y_{n+1}+y_{n+2
}$. Note that the base case $n=2$  is the same as the induction step.
Thus the claim is proved.
\end{proof}
Note that the condition in Claim 9 is invariant under passing to
a block subsequence.

\par We also note that the property that no two consecutive terms have
disjoint supports is not necessarily preserved by passing to a block
subsequence. We also observe that we have altered the sequence in Claim
8 that was assumed not to have two consecutive terms with disjoint
supports, and obtained one such that the carry of any sum with biggest
term $y_n$ stops before the support of $y_{n+1}$ begins. Further, this
property is preserved by passing to a block subsequence. Therefore,
starting with a sequence $(y_n)_{n\geq 1}$ with this property, we can
repeat the process in Claim 7 and Claim 8 again and assume that
$(y_n)_{n\geq 1}$ has both the property that the binary carry of any sum
stops before the support of the next term starts, and also the property
that no two consecutive terms have disjoint supports. These two
properties together are invariant under our standard operation of
passing to a block subsequence (noting that the property of `consecutive terms do not have disjoint supports' is preserved because the carry resulting from any
earlier additions is guaranteed to stop
before the supports overlap).\\\par For a sequence $(z_n)_{n\geq 1}$
that is of Type A and has the two properties we
have stated in the previous paragraph, we define $j_n$, for $n\geq 2$, to be the maximum of
the position of where the carry of $z_n+z_{n-1}$ stops (or equivalently
any finite sum of the $z_i$ with greatest terms $z_n$ and $z_{n-1}$) and
the position of
the last digit of $z_{n-1}$. For completeness, we set $j_1$ to
be one less than the position of the first digit of $y_1$. We
also define the \textit{middle} of $z_n$ to be the (possible empty) binary string 
contained strictly between $j_n$ and the position
of the first digit of $z_{n+1}$. We call the middle of $z_n$ \textit{proper} if it is nonempty and it contains at least one nonzero digit. Finally, we define the \textit{overlapping zone} of $z_n$ and
$z_{n+1}$ to be the consecutive set of positions between the position of
the first digit of $z_{n+1}$ and $j_{n+1}$ inclusive.

\begin{claim1} By passing to a block subsequence, we may assume that the
middle of $y_n$ is proper for all $n\geq 2$.
\end{claim1}
\begin{proof} We prove the claim by induction. Assume that all terms up
to $y_{n-1}$, $n\geq 3$, have a proper middle. If $y_n$ has a proper
middle, then we move on to the next term. If $y_n$ does not have a
proper middle, then $y_n+y_{n+1}$ has a proper middle with respect to
$y_{n-1}$ and $y_{n+2}$. This is because at position $j_{n+1}$ 
in the sum $y_n+y_{n+1}$
we find
the digit 1 by definition. Note that by Claim 9 the `new $j_n$' (corresponding to 
$y_n+y_{n+1}$) is equal to the `old $j_n$' (corresponding to $y_n$). Also, 
$j_{n+1}$ is less than the position of
the first digit of $y_{n+2}$ and, by construction,
$j_{n+1}>j_n$. Thus $y_n+y_{n+1}$ does have a proper middle. Therefore
we take the $n^{\text{th}}$ term to be $y_{n}+y_{n+1}$, and relabel the
rest of the sequence, thus complete the induction step. We note that the
same argument directly gives that the middle of $y_2$ can be assumed to
be proper, which finishes the proof.
\end{proof}

Note that, given that a sequence satisfies the conditions of Claim 9, the
conditions in Claim 10 are invariant under taking block subsequences. By
earlier remarks, we may now therefore assume that out sequence satisfies
Claim 8, Claim 9 and Claim 10.
\\
\par \textit{Stage 3. }We now add a final piece of notation. For positive integers $a$ and $b$, that do not have disjoint supports,
consider the positions where binary carries
occur in the sum $a+b$. Those positions form some intervals which we call the \textit{carry intervals} of $a$ and $b$. For example, if $a=110100010$ and $b=10100111$, then the carry intervals are $\{1,2,3 \}$, $\{5,6\}$ and $\{7,8,9\}$.
\par Let $m<n$ be two positive integers such that $m$ and $n-m$ do not have disjoint supports. We label a position by `2' if it is not part of any carry interval of $m$ and $n-m$, and both $m$ and $n$ have the digit 1 at that position. Also, we label a position by `1' if it is not part of any carry interval of $m$ and $n-m$, and exactly one of $m$ and $n$ has a nonzero digit at that position. Let $\tilde{J}(m,n)$ be the number of jumps from a position labelled `2' to a position labelled `1', as we read the labels from right to left (ignoring the positions that do not have labels).\par Returning to our sequence, let $y_n'$ be the number obtained from $y_n$ by changing all the digits in the carry intervals of $y_n$ and $y_{n-1}$, and in the carry intervals of $y_n$ and $y_{n+1}$, to 0, for each $n>1$. Let also $y_1'$ be the number obtained from $y_1$ by changing all the digits in the carry interval of $y_1$ and $y_2$ to 0. Note that the new sequence $(y_n')_{n \geq 1}$ is still increasing and of Type A as a consequence of Claim 10, and that its terms have pairwise disjoint supports.
\par With this in mind, our final colouring is: we colour $(a, b)$ by $(c_0, c_1, c_2, c_3, c_4, c_5, c_6)$, where $c_0$, $c_1$, $c_2$, $c_3$, $c_4$, $c_5$ are defined above, and $c_6=\tilde{J}(a,b)\mod 2$, with $c_6=0$ or 1, if $a$ and $b-a$ do not have disjoint supports, and $c_6=3$ if $a$ and $b-a$ have disjoint supports.
\par Let $(y_1+y_{k_1}+\cdots+y_{k_t}, y_1+\cdots+y_{k_{t+1}})$ be any of the pairs that have the same colour. We first observe that, by Claim 8 and Claim 9, $y_1+y_{k_1}+\cdots+y_{k_t}$ and $y_1+\cdots+y_{k_{t+1}}-(y_1+y_{k_1}+\cdots+y_{k_t})=y_2+\cdots+y_{k_1-1}+\cdots+y_{k_t+1}+\cdots+y_{k_{t+1}}$ never have disjoint supports -- for example, $y_{k_{t}}$ and $y_{k_t+1}$ do not have disjoint supports and, in the above sums, they are unchanged in their overlapping zone. We therefore have that $c_6\neq3$. Moreover, $\tilde{J}(y_1+y_{k_1}+\cdots+y_{k_t}, y_1+\cdots+y_{k_{t+1}})=J(y_1'+y_{k_1}'+\cdots+y_{k_t}', y_1'+\cdots+y_{k_{t+1}}')$, and so the same argument as in Claim 7 gives us a contradiction. This completes the proof of Theorem 4.
\end{proof}
\section{Open Problems}
The colouring of $\mathbb N^{(2)}$ above, constructed in the previous
section, involves
colouring pairs. But can Theorem 4 be solved by a colouring that comes
in a natural
way just from a colouring of numbers? In particular, what happens if we
promise
that our colouring for Theorem 4 gives $(a,b)$ a colour that depends
only on the value
of $a+b$?

In this case, the sum $a+b$, for a pair $(a,b)$ as in the statement of
Theorem 4,
is exactly a sum $a_1 y_1 + a_2 y_2 + \cdots + a_k y_k$, where each
$a_i$ is 1 or 2
with $a_k=1$ and $a_1=2$. Replacing $y_1$ with $2 y_1$, this yields the
following
question.

\begin{question} Is it true that whenever $\mathbb N$ is finitely
coloured, there exists a
sequence $(y_n)_{n\geq 1}$ such that every sum $a_1 y_1 + a_2 y_2 +
\cdots + a_k y_k$,
where each $a_i$ is 1 or 2, with $a_1=a_k=1$, has the same
colour?\end{question}

In general, such Ramsey-type statements, in which each coefficient can
vary
independently between some values, tend to be false. But here the fact
that there are
no `gaps', in other words that the $y_i$ in a given sum form an initial
segment of
the sequence $(y_n)_{n\geq 1}$, seems to perhaps make a difference.

We mention that if one allows $a_k$ to be 1 or 2, then the result is
easily seen to
be false, because one sum will be forced to be roughly double another,
which can be
ruled out by a suitable colouring. And if one instead allows $a_1$ to be
1 or 2 then
the result is also false, by considering the 2-colouring given by the
least
significant non-zero digit in the base 3 expansion of a number. Finally,
if one allows
`gaps', so that some of the $a_i$ are allowed to be zero, then it turns
out that the
result is again false, by using a colouring that examines the lengths of
the
jumps between successive elements of the support of a number: this is
similar to the colourings considered in \cite{Deuber}.

It is possible that Question 5 might be related to a problem considered
by Hindman,
Leader and Strauss \cite{Imre}. They conjectured that whenever $\mathbb
N$ is finitely coloured there
exists a sequence $(y_n)_{n\geq 1}$ such that all finite sums of the
$y_i$, and also all sums
of the form $y_{n-1} + 2 y_n + y_{n+1}$, are the same colour. In each of
these problems, it is the fact that the terms must be consecutive (in
each sum for Question 5, and for the sums
$y_{n-1} + 2 y_n + y_{n+1}$ in the conjecture of Hindman, Leader and
Strauss) that
causes the difficulty. We mention that if one attempts to strengthen the
conjecture of Hindman, Leader and Strauss in almost any significant way then the
resulting statement turns out to be false: this is related to the `inconsistency'
of Milliken-Taylor systems (see~\cite{Deuber} and the discussion in~\cite{Imre}). 
\par
Finally, returning to infinite words, what happens in Theorem 1 if we
relax
the condition that the factors $u_n$ form an actual factorisation
of our word $x$:
what if we allow some gaps between them? Could it be that we can
actually
allow gaps, as long as they are bounded, and still find a bad colouring?
This is a natural question to ask, in light of some variants of
Hindman's theorem, such as Theorem 5.23 of \cite{HS}.

\begin{question} Let $x$ be an infinite word on alphabet $X$ that is not
eventually periodic. Must there exist a finite colouring of $X^*$ such
that
there does not exist a sequence $u_1,u_2,\cdots$ of factors of $x$, with
$0 \leq A_x(u_{n+1}) - B_x(u_n) \leq C$ for all $n$ (for some $C$), such
that all the words
$u_{k_1} u_{k_2} \cdots u_{k_n}$, where $k_1 < k_2 \cdots < k_n$, have
the
same colour?
\end{question}

Note that if we insist that $C=0$ then this is precisely Theorem 1.\\

\textbf{Acknowledgement.} We would like to thank the referee for their very careful reading of the paper: their comments and suggestions have greatly improved the clarity of the paper.

\bibliographystyle{amsplain}
\bibliography{document}
\Addresses
\end{document}